\newtheorem{theorem}{Theorem}[section]
\newtheorem{proposition}{Proposition}[section]
\newtheorem{lemma}{Lemma}[section]
\newtheorem{corollary}{Corollary}[section]
\theoremstyle{definition}
\newtheorem{remark}{Remark}[section]
\DeclareMathOperator{\pr}{\mathsf P}
\newcommand{\eps}{\varepsilon}
\newcommand{\R}{\mathbb{R}}
\newcommand{\F}{\mathcal{F}}
\newcommand{\CC}{\mathcal C}
\newcommand{\abs}[1]{\left\vert#1\right\vert}
\newcommand{\norm}[1]{\left\lVert#1\right\rVert}
\newcommand{\norb}[1]{\big\lVert#1\big\rVert}
\newcommand{\set}[1]{\left\{#1\right\}}
\newcommand{\ex}[1]{\mathsf{E}\left[#1\right]}
\newcommand{\exo}[1]{\mathsf{E}_0\left[#1\right]}
\newcommand{\Ex}[1]{\mathsf{E}\Big[#1\Big]}
\newcommand{\ind}[1]{\mathbbm{1}_{#1}}
\def\ssf(#1){\spacefactor=#1\relax}
\begin{document}
\title{Convergence of solutions of mixed stochastic delay differential equations with applications}
\author[u1]{Yuliya Mishura}
\ead{myus@univ.kiev.ua}
%\address{Kyiv National Taras Shevchenko University, Department of Mechanics and Mathematics, Volodymyrska str. 64, 01601, Kyiv, Ukraine}
\author[u1,u2]{Taras Shalaiko}
\ead{tarasenya@gmail.com}
\author[u1]{Georgiy Shevchenko\corref{cor1}}
\cortext[cor1]{Corresponding author}
\ead{zhora@univ.kiev.ua}
\address[u1]{Taras Shevchenko National University of Kyiv,  Mechanics and Mathematics Faculty, Volodymyrska 64, 01601 Kyiv, Ukraine}
\address[u2]{Mannheim University, Institute of Mathematics, Department of Mathematical Economics II, A5, 6, D-68199, Mannheim, Germany}

\begin{abstract}
The paper is concerned with a mixed stochastic delay  differential equation involving both a Wiener process and a $\gamma$-H\"older continuous process with $\gamma>1/2$ (e.g.\ a fractional Brownian motion with Hurst parameter greater than $1/2$).
It is shown that its solution depends continuously on the coefficients and the initial data. Two applications of this result are given: the convergence of solutions to equations with vanishing delay to the solution of equation without delay and the convergence of Euler approximations for mixed stochastic differential equations. As a side result of independent interest, the integrability of solution to mixed stochastic delay differential equations is established.
\end{abstract}
\begin{keyword}
%% keywords here, in the form: keyword \sep keyword
Mixed stochastic differential equation\sep  stochastic delay differential equation\sep convergence of solutions \sep fractional Brownian motion \sep vanishing delay \sep Euler approximation 

\MSC[2010] 60H10 \sep 60H07 \sep  60G22
%% MSC codes here, in the form: \MSC code \sep code
%% or \MSC[2008] code \sep code (2000 is the default)

\end{keyword}

\maketitle

\section*{Introduction}
In this paper we consider a multidimensional mixed stochastic delay differential equation
 \begin{equation}\label{sdde-intro}
%\begin{aligned}
d X(t) = a(t,X_t)dt+
b(t,X_t)dW(t)+ c(t,X_t)dZ(t),\ t\in[0,T].
%\end{aligned}
 \end{equation}
In this equation the coefficients $a,b,c$ depend on the past $X_s = \set{X({s+u}),u\in[-r,0]}$ of the process $X$, and the initial condition is thus given by $X(t)=\eta(t)$, $t\in[-r,0]$, where $\eta\colon [-r,0]\to \R$ is some function. Equation \eqref{sdde-intro} is driven by two random processes: a standard Wiener process  $W$  and a process $Z$, whose trajectories are $\gamma$-H\"older continuous with $\gamma>1/2$. The process $Z$ is usually a long memory process, e.g.\ a fractional Brownian motion $B^H$ with the Hurst parameter $H>1/2$.

It\^o stochastic delay differential equations, i.e.\ those with $c=0$, were investigated in many articles, see  \cite{mao, mohammed} and references therein. Fractional stochastic delay differential equations, in which $b=0$ and $Z = B^H$, were considered in only few papers. 
For $H>1/2$, the existence and uniqueness of solution under different sets of assumptions was established in \cite{besalu-rovira,boufoussi-hajji1,caraballo,ferrante-rovira,ferrante-rovira1, leon-tindel}. In the case $H>1/3$, the existence and uniqueness of solution was shown in \cite{tindel-neuenkirch-nourdin} for coefficients of the form $f(X(t),X(t-r_1),X(t-r_2),\dots)$. 

The existence and uniqueness of solution to equation \eqref{sdde-intro} was established in \cite{mbfbm-delay}, where also the finiteness of moments was shown under the additional assumption that the coefficient $b$ is bounded.  Mixed equations without delay were considered in articles \cite{guerra-nualart,kubilius,mbfbm-sde,mbfbm-limit,mbfbm-integr,mbfbm-malliavin}.

In this article we prove  that if the coefficients and initial conditions of mixed stochastic delay  differential equations converge, then their solutions converge uniformly in probability.  We give two applications of this result. First we show that when the delay vanishes, solutions of mixed stochastic delay  differential equations converge uniformly in probability to a solution of equation without delay. Then we establish the uniform convergence of Euler approximations for mixed stochastic differential equation towards its solution. We also extend the results of \cite{mbfbm-integr} about the integrability of  solution to \eqref{sdde-intro}, dropping the assumption that $b$ is bounded.

The paper is organized as follows. Section~\ref{sec:prelim} provides necessary information about pathwise stochastic integration and describes the notation used in the article. Section~\ref{sec:limit} contains the main result of the article about convergence of solutions to mixed stochastic delay differential equations with convergent coefficients and initial conditions. It also contains a result of independent interest about the integrability of solution to mixed stochastic delay differential equation. Section~\ref{sec:applic} is devoted to applications of the convergence results. In Subsection~\ref{subsec:vanish}, a sequence of mixed stochastic delay differential equations is considered with delay horizon converging to zero, and it is shown that their solutions converge to a solution of equation without delay. In Subsection~\ref{subsec:euler} it is proved that the Euler approximations for  mixed stochastic differential equation (without delay) converge to its solution, as the mesh of partition goes to zero. Appendix contains an auxiliary result about convergence of solutions to It\^o stochastic delay differential equations with random coefficients, which is used in the proof of main theorem.

\section{Preliminaries}\label{sec:prelim}

Let $(\Omega,\mathcal{F},\mathbb{F} = \{\mathcal{F}_t,t\ge 0\}, P)$ be a standard stochastic basis. 

Throughout the article, $\abs{\cdot}$ will denote the absolute value of a real number, the Euclidean norm of a vector, or the operator norm of a matrix. The symbol $C$ will denote a generic constant, whose
value may change from one line to another. To emphasize its dependence on some parameters, we will put them into subscripts.

We will need the notion of generalized fractional Lebesgue--Stieltjes integral. Below we give only basic information on the integral, further details and proofs can be found in \cite{zahle}. %It is defined as follows.

Let  $f,g\colon[a,b]\to \R$, $\alpha\in (0,1)$. Define the forward and backward fractional Riemann--Liouville derivatives
\begin{gather*}
\big(D_{a+}^{\alpha}f\big)(x)=\frac{1}{\Gamma(1-\alpha)}\bigg(\frac{f(x)}{(x-a)^\alpha}+\alpha
\int_{a}^x\frac{f(x)-f(u)}{(x-u)^{1+\alpha}}du\bigg),\\
\big(D_{b-}^{1-\alpha}g\big)(x)=\frac{e^{i\pi
\alpha}}{\Gamma(\alpha)}\bigg(\frac{g(x)}{(b-x)^{1-\alpha}}+(1-\alpha)
\int_{x}^b\frac{g(x)-g(u)}{(u-x)^{2-\alpha}}du\bigg),
\end{gather*}
where $x\in(a,b)$.

The generalized Lebesgue--Stieltjes
integral %$\int_a^bf(x)dg(x)$
is defined as
\begin{equation*}\int_a^bf(x)dg(x)=e^{i\pi\alpha}\int_a^b\big(D_{a+}^{\alpha}f\big)(x)\big(D_{b-}^{1-\alpha}g_{b-}\big)(x)dx
\end{equation*}
provided the integral in the right-hand side exists. 
For functions $f,g\colon [a,b]\to \mathbb \R$ and a number $\alpha\in(0,1)$ define
\begin{gather}\label{norm1a}
\norm{f}_{1,\alpha;[a,b]} = \int_a^b \left(\frac{\abs{f(a)}}{(t-a)^\alpha}
+ \int_a^t \frac{\abs{f(t)-f(s)}}{(t-s)^{1+\alpha}}ds\right)dt,\\
\norm{g}_{0,\alpha;[a,b]} = \sup_{a\le s\le t\le b}\left(\frac{\abs{g(t) -g(s)}}{(t-s)^{1-\alpha}}
+ \int_s^t \frac{\abs{g(u)-g(s)}}{(u-s)^{2-\alpha}}du\right).
\end{gather}
Note that only the first expression defines a norm, the second defines a seminorm. 
The following fact is evident: if $\norm{f}_{1,\alpha;[a,b]}< \infty$
and $\norm{g}_{0,\alpha;[a,b]}< \infty$, then the generalized Lebesgue--Stieltjes integral is well defined and admits an estimate 
\begin{equation}\label{integrfbmestimate}
\abs{\int_a^b f(x) dg(x)}\le \frac{1}{\Gamma(\alpha)\Gamma(1-\alpha)}\norm{f}_{1,\alpha;[a,b]}\norm{g}_{0,\alpha;[a,b]}.
\end{equation}
We will also need an estimate in terms of H\"older norms. 
Namely, if $f\in C^\lambda[a,b]$, $g\in C^\mu[a,b]$ with $\lambda\in(0,1),\mu\in(0,1)$ and $\lambda+\mu>1$, then the generalized Lebesgue--Stieltjes integral is well defined and coincides with the limit of Riemann--Stieltjes integral sums. Moreover, the Young--Love inequality holds:
\begin{equation*}
\left|\int_{a}^b f(s) dg(s)\right|\leq C_{\lambda,\mu} \norm{g}_{a,b,\mu}\big(\norm{f}_{a,b,\infty}+\norm{f}_{a,b,\lambda}(b-a)^{\lambda}\big)(b-a)^{\mu},
%\label{integrestim}
\end{equation*}
where $\norm{f}_{a,b,\infty} = \sup_{x\in[a,b]}\abs{f(x)}$
is the supremum norm on $[a,b]$, and
$$
\norm{f}_{a,b,\lambda} = \sup_{a\le x<y\le b}\frac{\abs{f(y)-f(x)}}{(y-x)^\lambda}
$$
is the H\"older seminorm.

\section{Limit theorem for mixed equation}\label{sec:limit}

Fix some $T>0$ and $r>0$ (they will play a role of time horizon and delay horizon, respectively) and denote by $\CC = C([-r,0];\R^d)$ the space of continuous $\R^d$-valued functions defined on the interval $[-r,0]$. This space is a Banach space with the supremum norm $\norm{\psi}_{\CC} = \max_{s\in[-r,0]}\abs{\psi(s)}$, $\psi\in\CC$. 

In order to introduce the dependence on past, for a stochastic process $\xi = \set{\xi(t),t\in[-r,T]}$ define a \textit{segment} $\xi_t\in \CC$ at the point $t\in[0,T]$ by $\xi_s(u) = \xi(s+u)$, $u\in[-r,0]$. 

Consider the following sequence of mixed stochastic delay differential equations (SDDEs) in $\R^d$ indexed by $n\ge 0$:
\begin{equation}\label{sdde-coord}
\begin{aligned}
& X^n(t) = \eta^n(0) + \int_{0}^{t} a^n(s,X^n_s)ds + \sum_{i=1}^{m} \int_0^t b^n_i(s,X^n_s)dW_i(s) + \sum_{j=1}^{l}\int_0^t c^n_j(s,X^n_s)dZ_j(s), t\in [0,T],\\
& X^n(t) = \eta^n(t), t\in[-r,0].
\end{aligned}
\end{equation}
Here $a^n,b^n_i,c^n_j\colon  [0,T]\times \CC\to\R^d$, $i=1,\dots,m$,  $j=1,\dots,l$, are measurable functions; $Z = \set{Z(t),t\in[0,T]}$ is an $\mathbb{F}$-adapted process in $\R^l$ such that its trajectories are almost surely H\"older continuous of order $\gamma>1/2$; $W = \set{W(t),t\in[0,T]}$ is an $\mathbb{F}$-Wiener process in $\R^m$, the ``initial condition'' $\eta\colon[0,T]\to \R^d$ is non-random. 
The integral w.r.t.\ the Wiener process is understood as the usual It\^o integral, which is well defined provided that the integrand belongs to $L^2[0,T]$ a.s. The integral w.r.t.\ is understood in generalized Lebesgue--Stieltjes sense. 

We will assume the following about the coefficients of equations \eqref{sdde-coord}:
\begin{enumerate}[H1.]
\item
For all $\psi\in\CC$, $t\in[0,T]$,
\begin{gather*}
|a^n(t,\psi)|+|b^n(t,\psi)|+|c^n(t,\psi)|\leq K(1+\norm{\psi}_{\CC}).
\end{gather*}
\item
For all $t\in[0,T]$, $\psi\in\CC$,  $c^n$ has a Fr\'echet derivative  $\partial_\psi c^n(t,\psi)$ belonging to the space $L(\CC, \R^d)$ of bounded linear operators from $\CC$ to $\R^d$, and this derivative is bounded uniformly in $t\in[0,T],\psi\in\CC$:
$$
\norm{\partial_{\psi}c^n(t,\psi)}_{L(\CC, \R^d)}\le K.
$$
\item The functions $a^n$, $b^n$ and $\partial_\psi c^n$ are locally Lipschitz continuous in $\psi$: for any $R>0$, $t\in[0,T]$, and all $\psi_1,\psi_2\in \CC$ with $\norm{\psi_1}_\CC\le R$,  $\norm{\psi_2}_\CC\le R$,
$$|a^n(t,\psi_1)-a^n(t,\psi_2)|+|b^n(t,\psi_1)-b^n(t,\psi_2)|+\norm{\partial_\psi c^n(t,\psi_1)-\partial_\psi c^n(t,\psi_2)}_{L(\CC, \R^d)}\leq K_R\norm{\psi_1-\psi_2}_{\CC}.$$
\item The functions $c^n$ and $\partial_\psi c^n$ are H\"older continuous in $t$: for some  $\beta\in(1-\gamma,1)$ and for all $t_1,t_2\in[0,T]$, $\psi\in\CC$
$$|c^n(t_1,\psi)-c^n(t_2,\psi)|\le K|t_1-t_2|^\beta(1+\norm{\psi}_\CC),\quad \norm{\partial_\psi c^n(t_1,\psi)-\partial_\psi c^n(t_2,\psi)}_{L(\CC, \R^d)}\leq K|t_1-t_2|^\beta.$$
\item The initial condition  $\eta^n$ is a H\"older continuous function: for some  $\theta\in(1-\gamma, 1/2)$ and for all $t_1,t_2\in[0,T]$
$$
\abs{\eta^n(t_1)-\eta^n(t_2)}\le K\abs{t_1-t_2}^\theta.
$$
\end{enumerate}

Fix some  $\alpha\in(1-\gamma,1/2)$, denote $h(t,s)=(t-s)^{-1-\alpha}$ and define   for an either real- or vector-valued function $f$
\begin{align*}
%\norm{f}_{0;t} &= \sup_{0\le u<v\le t} \left(\frac{\abs{f(v)-f(u)}}{(v-u)^{1-\alpha}} + \int_u^v \frac{\abs{f(u)-f(z)}}{(z-u)^{2-\alpha}}dz\right),\\
\norm{f}_{\infty,t} &= \sup_{s\in[-r,t]}\abs{f(s)},\\
\norm{f}_{1,t} &= \int_0^t \norm{f({\cdot+t-s})-f(\cdot)}_{\infty,s} h(t,s)ds,\\
\norm{f}_{t} &= \norm{f}_{\infty,t}+\norm{f}_{1,t}.
\end{align*}
Also denote for brevity $\norm{f}_{0;t} = \norm{f}_{0,\alpha;[0,t]}$.
It is proved in \cite[Theorem 4.1]{mbfbm-delay} that under the assumptions H1--H5 equation \eqref{sdde-coord} has a unique solution, which is an $\mathbb{F}$-adapted process $X$ such that
$\norm{X}_T<\infty$ a.s., and \eqref{sdde-coord} holds almost surely for all $t\in[0,T]$. 

In the following we will abbreviate equations \eqref{sdde-coord} and their ingredients as
\begin{equation}\label{main-sdde}
X^n(t) = \eta^n(0) + \int_{0}^{t} a^n(s,X^n_s)ds +  \int_0^t b^n(s,X^n_s)dW(s) + \int_0^t c^n(s,X^n_s)dZ(s).
\end{equation}

We will need some auxiliary results from \cite{mbfbm-delay}, which we for convenience state here in a slightly modified form.

\begin{lemma}\label{prop-apriormoment}
Let the coefficients of equation 
\begin{align*}
Y(t) &= \eta(0) + \int_{0}^{t} a(s,Y_s)ds +  \int_0^t b(s,Y_s)dW(s) + \int_0^t c(s,Y_s)dZ(s),\ t\in[0,T],\\
Y(t)& = \eta(t),\ t\in[-r,0]
%\label{eqY}
\end{align*}
satisfy \textup{H1--H5}, and let $A_{M} = \set{\norm{Z}_{0;T}\le M}$ for $M\ge 1$. Then for each  $p\ge 1$ there is a constant $C=C_{M,p,r,T,K,\alpha,\eta(0)}$ such that 
$$
\ex{\norm{Y}_{T}^p\ind{A_{M}}}\le C.
$$
\end{lemma}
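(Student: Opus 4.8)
The plan is to obtain an a priori bound for $\norm{Y}_t$ on the event $A_M$ by a Gronwall-type argument, treating the H\"older norm $\norm{Z}_{0;T}$ as a bounded random variable and the It\^o integral via moment estimates. First I would fix $p\ge 1$ and work on $A_M$, so that $\norm{Z}_{0;t}\le M$ for all $t\le T$. The drift term is controlled by H1 directly: $\big\lvert\int_0^t a(s,Y_s)ds\big\rvert\le K\int_0^t(1+\norm{Y_s}_{\CC})ds\le C\int_0^t(1+\norm{Y}_{\infty,s})ds$, and a similar estimate holds inside the $\norm{\cdot}_{1,t}$-seminorm. For the pathwise integral $\int_0^t c(s,Y_s)dZ(s)$ I would use the estimate \eqref{integrfbmestimate} together with H1, H2, H4: the norm $\norm{c(\cdot,Y_\cdot)}_{1,\alpha;[0,t]}$ is bounded by a constant times $M$ times $\big(1+\norm{Y}_{\infty,t}+\int_0^t\norm{Y({\cdot+t-s})-Y(\cdot)}_{\infty,s}h(t,s)ds\big)$, i.e.\ by $C_M(1+\norm{Y}_t)$, where the key point is that the increments $c(u,Y_u)-c(v,Y_v)$ split into a time part (H\"older of order $\beta>1-\alpha$ by H4) and a segment part (Lipschitz, or rather Fr\'echet-differentiable with bounded derivative, by H2), the latter producing exactly the seminorm $\norm{Y}_{1,t}$. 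This is where one must be slightly careful, since the estimate is not linear in $\norm{Y}_t$ on small intervals — on an interval of length $\Delta$ the $Z$-integral contributes a factor $\Delta^{\gamma-\alpha}$ (or $\Delta^{\beta+\alpha-1}$), which is small, so one gets a genuine contraction-type recursion rather than a plain Gronwall inequality.

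Concretely, I would partition $[0,T]$ into subintervals of a (deterministic, $M$-dependent) length $\Delta$ small enough that the factor coming from the $Z$-integral on each subinterval is $\le 1/2$. On the first subinterval $[0,\Delta]$ one gets $\norm{Y}_{\Delta}\le C_M(1+\abs{\eta(0)}) + \frac12\norm{Y}_\Delta + (\text{It\^o term})$, and then one rearranges. Iterating over the $\lceil T/\Delta\rceil$ subintervals, with the value at the right endpoint of one interval feeding into the initial data of the next, yields a bound of the form $\norm{Y}_{T}\le C_{M}\big(1+\abs{\eta(0)}+\sum_k (\text{It\^o term on }k\text{-th piece})\big)$ on $A_M$. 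The number of pieces is finite and depends only on $M,T,K,\alpha,\gamma$, so taking $p$-th moments reduces everything to bounding $\ex{\big(\sup_{s\le t}\abs{\int_0^s b(u,Y_u)dW(u)}\big)^p\ind{A_M}}$.

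For the It\^o part I would use the Burkholder--Davis--Gundy inequality: $\ex{\sup_{s\le t}\abs{\int_0^s b(u,Y_u)dW(u)}^p\ind{A_M}}\le C_p\,\ex{\big(\int_0^t\abs{b(u,Y_u)}^2du\big)^{p/2}\ind{A_M}}\le C_{p}\,\ex{\big(\int_0^t(1+\norm{Y_u}_{\CC})^2 du\big)^{p/2}\ind{A_M}}\le C_{p,T}\Big(1+\int_0^t \ex{\norm{Y}_{\infty,u}^p\ind{A_M}}du\Big)$, using H1 and Jensen/H\"older in the last step. Plugging this back, writing $\varphi(t) = \ex{\norm{Y}_{t}^p\ind{A_M}}$, gives (after the finitely-many-pieces bookkeeping) a closed inequality $\varphi(t)\le C_{M,p,r,T,K,\alpha,\eta(0)}\big(1+\int_0^t\varphi(s)ds\big)$, provided one first knows $\varphi(t)<\infty$. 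Finiteness of $\varphi(t)$ follows because $\norm{Y}_T<\infty$ a.s.\ and on $A_M$ the same Gronwall-type scheme gives a pathwise bound by a polynomial in $M$ and $\norm{Z}_{0;T}\le M$ and the It\^o term, whose moments are finite; alternatively one localizes by stopping when $\norm{Y}_t$ exceeds a large level, applies the argument, and removes the stopping time by monotone convergence. Then the standard Gronwall lemma gives $\varphi(T)\le C_{M,p,r,T,K,\alpha,\eta(0)}e^{C T}$, which is the claim.

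The main obstacle I expect is the bookkeeping in the ``finitely many pieces'' step: one has to verify that the number of subintervals needed to make the $Z$-integral a contraction is deterministic and depends only on $M$ (and the fixed parameters), not on $\omega$, and that concatenating the pieces does not blow up the constant — each concatenation multiplies the constant by a fixed factor and there are $\lceil T/\Delta\rceil$ of them, so the final constant is $(\text{fixed factor})^{\lceil T/\Delta\rceil}$, still depending only on the allowed parameters. A secondary technical point is getting the correct interplay of the seminorm $\norm{\cdot}_{1,t}$ with the subinterval decomposition, since $\norm{\cdot}_{1,t}$ is not additive over subintervals; one handles this by noting that the relevant estimates from \cite{mbfbm-delay} are already stated in terms of $\norm{\cdot}_t$, so it suffices to quote them and track constants, rather than reprove the integral estimates from scratch.
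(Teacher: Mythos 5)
First, a point of comparison: the paper does not prove Lemma~\ref{prop-apriormoment} at all --- it is quoted from \cite{mbfbm-delay} ``in a slightly modified form''. The only in-paper benchmark for your argument is therefore the proof of Theorem~\ref{thm:moments}, which establishes the analogous moment bound by exactly the scheme you describe: localize by a stopping time in $Y$ (so that indicators inserted into the stochastic integral remain adapted --- note that $\ind{A_M}$ itself is not $\F_u$-measurable, so it cannot simply be slipped inside a Burkholder--Davis--Gundy estimate), choose a deterministic interval length $\Delta$ depending on the bound for the relevant norm of $Z$ so that the $Z$-integral becomes a contraction with factor $\le 1/2$, iterate over $[T/\Delta]+1$ pieces (producing a constant of the form $(2K'_p+1)^{T/\Delta+1}$, which depends only on the admissible parameters), and finally remove the localization by Fatou. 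Your overall strategy is the right one and matches the paper's.

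The one genuine gap is the treatment of the It\^o integral. You assert that after the piecewise bookkeeping ``everything reduces to bounding $\ex{\sup_{s\le t}\abs{\int_0^s b(u,Y_u)dW(u)}^p\ind{A_M}}$'', which you then handle by Burkholder--Davis--Gundy. But the quantity to be bounded is $\norm{Y}_T^p$ with $\norm{Y}_t=\norm{Y}_{\infty,t}+\norm{Y}_{1,t}$, and the seminorm
\[
\norb{I^b}_{1,t}=\int_0^t\norb{I^b(\cdot+t-s)-I^b(\cdot)}_{\infty,s}\,(t-s)^{-1-\alpha}\,ds
\]
is \emph{not} controlled by $\sup_s\abs{I^b(s)}$: the kernel $(t-s)^{-1-\alpha}$ is non-integrable at $s=t$, so one needs the increments of the It\^o integral to decay at a H\"older rate strictly better than $\alpha$. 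This is true in $L^p$ because $\alpha<1/2$, but establishing it requires either the Garsia--Rodemich--Rumsey inequality (precisely the device used in the proof of Theorem~\ref{thm:moments}, where $\norb{I^b}_{0,t,\kappa}$ with $\kappa\in(\theta,1/2)$ is estimated that way) or a Minkowski-integral-inequality-plus-BDG argument applied to the increments before integrating against $(t-s)^{-1-\alpha}$. You flag the interaction with $\norm{\cdot}_{1,t}$ as a ``secondary technical point'' to be outsourced to \cite{mbfbm-delay}, but it is exactly the step at which plain BDG for the supremum is insufficient. Once this is repaired, the rest of your argument (contraction in $\Delta$, finitely many deterministic pieces, Gronwall, localization and Fatou) goes through and yields the stated constant $C_{M,p,r,T,K,\alpha,\eta(0)}$; the dependence on $\eta$ only through $\eta(0)$ is justified by H5, which gives $\norm{\eta}_{\CC}\le\abs{\eta(0)}+Kr^\theta$.
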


\begin{lemma}\label{prop-quasicontract}
Let the coefficients of equations
\begin{align*}
Y^{i}(t)& = \eta(0) + \int_{0}^{t} a(s,Y^i_s)ds +  \int_0^t b(s,Y^i_s)dW(s) + \int_0^t c(s,Y^i_s)dZ^i(s),\quad t\in[0,T], i=1,2,\\
Y^{i}(t)& = \eta(t),\ t\in[-r,0]
\end{align*}
satisfy \textup{H1--H5}, and let $A_{M,R} = \set{\norm{Z^1}_{0;T}\le M,\norm{Z^2}_{0;T}\le M,\norm{Y^1}_T\le R, \norm{Y^2}_T\le R}$ for  $M\ge 1$, $R\ge 1$. Then for each $p\ge 4/(1-2\alpha)$ there is a constant $C = C_{M,R,p,r,K,K_R,\alpha}$ such that
$$
\ex{\norm{Y^1-Y^2}_{\infty,T}^p\ind{A_{M,R}}}\le C\ex{\norm{Z^1-Z^2}^p_{0;T}\ind{A_{M,R}}}.
$$
\end{lemma}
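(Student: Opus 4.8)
The plan is to bound $\norm{Y^1-Y^2}_{\infty,T}$ on the event $A_{M,R}$ by a (random) multiple of $\norm{Z^1-Z^2}_{0;T}$ through a Gronwall-type argument applied locally on small subintervals, and then use integrability of the relevant random constants to pass to $L^p$. Throughout, write $\Delta(t) = Y^1(t) - Y^2(t)$ and split the difference of the two equations into four pieces: the drift terms $\int_0^t \bigl(a(s,Y^1_s)-a(s,Y^2_s)\bigr)ds$, the It\^o terms $\int_0^t \bigl(b(s,Y^1_s)-b(s,Y^2_s)\bigr)dW(s)$, and the two $Z$-integral contributions, namely $\int_0^t \bigl(c(s,Y^1_s)-c(s,Y^2_s)\bigr)dZ^1(s)$ and $\int_0^t c(s,Y^2_s)\,d(Z^1-Z^2)(s)$. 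On $A_{M,R}$, H1 and H3 give local Lipschitz bounds with constant $K_R$ for the first two pieces and $\norm{\partial_\psi c}$-bounds for $c$ itself; the first $Z$-integral is handled using the estimate \eqref{integrfbmestimate} together with the bound on $\norm{Z^1}_{0;T}\le M$ and the Fr\'echet-derivative bound from H2, producing a term of the form $C_M \int_0^t \norm{\Delta}_{\infty,s}\,(\text{something integrable in }s)\,ds$ plus a $\norm{\Delta}_{1,s}$-contribution; the last $Z$-integral is bounded, again via \eqref{integrfbmestimate} (or the Young--Love inequality), by $C\norm{c(\cdot,Y^2_\cdot)}_{1,\alpha}\cdot\norm{Z^1-Z^2}_{0;T}$, and $\norm{c(\cdot,Y^2_\cdot)}_{1,\alpha}$ is controlled on $A_{M,R}$ by a polynomial in $R$ and $\norm{Z^2}_{0;T}$ using H1, H2, H4 and $\norm{Y^2}_T\le R$.

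The key analytic device is to estimate the full norm $\norm{\Delta}_t$ (not just the sup-norm) on a short interval $[t_0, t_0+\delta]$: standard pathwise estimates (e.g.\ those underlying \cite[Theorem 4.1]{mbfbm-delay}) show that for $\delta$ small enough, depending only on $M,\alpha$ and the structural constants, one gets a self-improving inequality
\begin{equation*}
\sup_{t\in[t_0,t_0+\delta]}\norm{\Delta}_t \le \tfrac12 \sup_{t\in[t_0,t_0+\delta]}\norm{\Delta}_t + C_{M}\,\norm{\Delta}_{\infty,t_0} + C_{M,R}\,\norm{Z^1-Z^2}_{0;T} + (\text{Wiener term}),
\end{equation*}
so the $\frac12$-term can be absorbed and one obtains $\sup_{[t_0,t_0+\delta]}\norm{\Delta}_t \le 2C_M \norm{\Delta}_{\infty,t_0} + 2C_{M,R}\norm{Z^1-Z^2}_{0;T} + \dots$. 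Iterating over $N = \lceil T/\delta\rceil$ consecutive intervals and using $\norm{\Delta}_{\infty,0}=0$ (the initial conditions coincide) yields a deterministic bound $\norm{\Delta}_{\infty,T}\le C_{M,R}\bigl(\norm{Z^1-Z^2}_{0;T} + (\text{accumulated Wiener terms})\bigr)$ on $A_{M,R}$.

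The Wiener contribution is the one genuinely probabilistic ingredient. On each subinterval it appears as a supremum of an It\^o integral whose integrand is bounded by $K_R \norm{\Delta}_{\infty,s}$; raising to the $p$-th power, applying the Burkholder--Davis--Gundy inequality, and using H\"older in time produces, for $p\ge 4/(1-2\alpha)$ (the exponent is needed precisely so that the time-integrability exponents in the $\norm{Z}_{0;T}$-type estimates close up), a bound of the form $\ex{\sup_{[t_0,t_0+\delta]}\abs{\int \dots dW}^p \ind{A_{M,R}}}\le C\,\delta^{p/2-1}K_R^p \int \ex{\norm{\Delta}_{\infty,s}^p\ind{A_{M,R}}}ds$, i.e.\ it is dominated by the left-hand side with a small coefficient once $\delta$ is small. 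Taking $\ex{\cdot\,\ind{A_{M,R}}}$ of the iterated pathwise inequality, Minkowski's inequality in $L^p$, and a discrete Gronwall over the $N$ steps then absorb the Wiener term and leave $\ex{\norm{\Delta}_{\infty,T}^p\ind{A_{M,R}}}\le C_{M,R,p}\,\ex{\norm{Z^1-Z^2}_{0;T}^p\ind{A_{M,R}}}$, which is the claim. The main obstacle is the bookkeeping: organizing the local-to-global passage so that the random constants $C_M$, $C_{M,R}$ stay under control and the BDG-based absorption of the Wiener term is compatible with the pathwise $\frac12$-absorption of the $Z^1$-integral on the \emph{same} mesh $\delta$; this is where the constant $\delta$ has to be chosen carefully (small relative to both $M$ and the BDG constants) and where the integrability restriction $p\ge 4/(1-2\alpha)$ is forced.
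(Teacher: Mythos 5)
The paper does not prove this lemma at all: it is imported verbatim (``in a slightly modified form'') from \cite{mbfbm-delay}, so there is no in-paper proof to compare against. Your sketch follows the standard quasi-contraction argument for mixed equations --- the four-way decomposition of $Y^1-Y^2$, the estimate \eqref{integrfbmestimate} with the $\tfrac12$-absorption on a mesh of size $\delta=\delta(M,R)$, BDG plus Gronwall for the Wiener part, and the exponent $p\ge 4/(1-2\alpha)$ closing the H\"older exponents --- and is sound in outline; the one step you fold into ``bookkeeping'' that genuinely needs spelling out is that $\ind{A_{M,R}}$ is $\F_T$-measurable, so before applying BDG you must replace it by the adapted indicator $\ind{\set{s\le\tau_{M,R}}}$ of the stopping time $\tau_{M,R}=\inf\set{t:\norm{Y^1}_t\vee\norm{Y^2}_t>R\text{ or }\norm{Z^1}_{0;t}\vee\norm{Z^2}_{0;t}>M}$, run the Gronwall iteration for the stopped quantities, and only restrict to $A_{M,R}\subset\set{\tau_{M,R}\ge T}$ at the end.
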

We will also need a result about the finiteness of moments  of $\norm{Y}_{0,T,\infty}$.
 \begin{theorem}
  \label{thm:moments} 
 Let the coefficients of equation 
 \begin{align*}
 Y(t) &= \eta(0) + \int_{0}^{t} a(s,Y_s)ds +  \int_0^t b(s,Y_s)dW(s) + \int_0^t c(s,Y_s)dZ(s),\ t\in[0,T],\\
 Y(t)& = \eta(t),\ t\in[-r,0]
 %\label{eqY}
 \end{align*}
 satisfy \textup{H1--H5}, and the process $\{Z(t),t\in[0,T]\}$ be such that
 $$\ex{\exp\{\norm{Z}^{1/\gamma}_{0,T,\gamma}\}}\le L.$$
 Then for any $p>0$ there is a constant $C = C_{p,r,T,K,L,\eta(0)}$ such that 
 $$\ex{\norm{Y}^p_{0,T,\infty}}\le C.$$
 \end{theorem}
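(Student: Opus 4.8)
The strategy is to reduce the bound on $\ex{\norm{Y}^p_{0,T,\infty}}$ to the quantities already controlled by Lemma~\ref{prop-apriormoment}, and to convert the exponential moment hypothesis on $\norm{Z}_{0,T,\gamma}$ into a bound on the probabilities $P(A_M^c)$, where $A_M = \set{\norm{Z}_{0;T}\le M}$. First I would note that the seminorm $\norm{Z}_{0;T} = \norm{Z}_{0,\alpha;[0,T]}$ is dominated by a constant (depending on $T$, $\alpha$, $\gamma$) times the H\"older seminorm $\norm{Z}_{0,T,\gamma}$; this is a routine estimate comparing the two expressions in \eqref{norm1a}-type definitions, using $\gamma > \alpha$ and that $[0,T]$ is bounded. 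Consequently $\exp\set{c\,\norm{Z}_{0;T}^{1/\gamma}}$ is integrable (with a possibly smaller constant), so by Markov's inequality $P(A_M^c) = P\big(\norm{Z}_{0;T} > M\big) \le C\exp\set{-c M^{1/\gamma}}$ for constants $C, c>0$ depending on $L$ and the other fixed parameters.

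Next I would split the expectation along the dyadic-type shells $B_M = A_{2M}\setminus A_M = \set{M < \norm{Z}_{0;T}\le 2M}$ for $M$ running over powers of $2$ (plus the bottom piece $A_1$), writing
\begin{equation*}
\ex{\norm{Y}^p_{0,T,\infty}} = \ex{\norm{Y}^p_{0,T,\infty}\ind{A_1}} + \sum_{k\ge 0}\ex{\norm{Y}^p_{0,T,\infty}\ind{B_{2^k}}}.
\end{equation*}
On each shell apply the Cauchy--Schwarz inequality to get $\ex{\norm{Y}^p_{0,T,\infty}\ind{B_{2^k}}} \le \big(\ex{\norm{Y}^{2p}_{0,T,\infty}\ind{A_{2^{k+1}}}}\big)^{1/2} P(A_{2^k}^c)^{1/2}$. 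Since $\norm{Y}_{0,T,\infty} \le \norm{Y}_{\infty,T} \le \norm{Y}_T$, Lemma~\ref{prop-apriormoment} bounds the first factor by $\big(C_{2^{k+1},2p,\dots}\big)^{1/2}$, so I would need to make the constant's dependence on the truncation level $M$ explicit — specifically, that it grows at most polynomially (or at most like $e^{CM}$, or in any case strictly slower than $e^{cM^{1/\gamma}}$) in $M$. Tracking this growth through the proof of Lemma~\ref{prop-apriormoment} in \cite{mbfbm-delay} is the main obstacle: the Gr\"onwall-type iteration there produces a constant of the form roughly $\exp\set{C M^{1/(1-\alpha)} \text{ or } M^{1/\gamma}}$ — one must check that the exponent of $M$ coming from the a priori bound is indeed smaller than $1/\gamma$ after an appropriate choice of $\alpha\in(1-\gamma,1/2)$, or alternatively redo that estimate keeping the $M$-dependence sharp. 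Granting a polynomial-in-$M$ (or sub-$e^{cM^{1/\gamma}}$) bound on $C_{M,2p,\dots}$, the shell sum $\sum_k (C_{2^{k+1}})^{1/2}\,C^{1/2}\exp\set{-\tfrac12 c\, 2^{k/\gamma}}$ converges, and the bottom term $\ex{\norm{Y}^p_{0,T,\infty}\ind{A_1}}\le C_{1,p,\dots}$ is finite directly from Lemma~\ref{prop-apriormoment}.

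The only genuinely delicate point, then, is quantifying the dependence of the constant in Lemma~\ref{prop-apriormoment} on $M$; everything else is a standard truncation-plus-Cauchy--Schwarz patching argument. If that dependence turns out to be only $e^{CM}$ rather than polynomial, one should instead work directly with $A_M = \set{\norm{Z}_{0;T}\le M}$ (not dyadic shells) and optimize $M = M(p)$, but since $P(A_M^c)$ decays like $e^{-cM^{1/\gamma}}$ with $1/\gamma < 2$, it still dominates any $e^{CM}$ growth and the series/integral converges. I expect the argument to go through cleanly under either scenario, so the write-up should first record the comparison $\norm{Z}_{0;T}\le C\norm{Z}_{0,T,\gamma}$, then the tail bound on $P(A_M^c)$, and finally the patching estimate, invoking Lemma~\ref{prop-apriormoment} with the explicit $M$-dependence extracted from \cite{mbfbm-delay}.
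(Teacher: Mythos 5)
There is a genuine gap, and you have put your finger on exactly where it is: the quantitative dependence of the a priori moment bound on the truncation level, which you leave as a conditional assumption, is the entire content of the paper's proof. Your outer patching argument (truncate on $\set{\norm{Z}\le M}$, Cauchy--Schwarz, sum against the tail $\exp\{-cM^{1/\gamma}\}$) is essentially the same device the paper uses in its final display, so that part is fine. But neither of your optimistic scenarios for the constant $C_{M,p,\dots}$ of Lemma~\ref{prop-apriormoment} holds. That lemma is formulated in terms of the seminorm $\norm{Z}_{0;T}=\norm{Z}_{0,\alpha;[0,T]}$, and the Gronwall-type iteration behind it (as in \cite{mbfbm-delay}, following the Nualart--R\u{a}\c{s}canu scheme) produces a constant of order $\exp\{CM^{1/(1-\alpha)}\}$ --- certainly not polynomial in $M$, and not $e^{CM}$ either. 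Since H1--H5 force $\alpha>1-\gamma$, one always has $1/(1-\alpha)>1/\gamma$ for every admissible $\alpha\in(1-\gamma,1/2)$, so the growth of the constant strictly beats the decay $\exp\{-cM^{1/\gamma}\}$ coming from the exponential moment hypothesis, and your shell sum diverges. No choice of $\alpha$ repairs this; the exponent can be pushed arbitrarily close to $1/\gamma$ from above but never reaches it.

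What the paper actually does is re-derive the a priori estimate from scratch in terms of the $\gamma$-H\"older seminorm of $Z$ rather than $\norm{Z}_{0;T}$: it stops the solution at $\tau_{N,R}=\min\set{t:\norm{Y}_{0,t,\infty}\ge R\text{ or }\norm{Z}_{0,t,\gamma}\ge N}$, bounds the $Z$-integral by the Young--Love inequality on windows of length $\Delta\sim N^{-1/\gamma}$, controls the It\^o integral via the Garsia--Rodemich--Rumsey inequality, and chains over the $T/\Delta$ windows to obtain $\ex{\norm{Y}_{0,T,\infty}^p\ind{\norm{Z}_{0,T,\gamma}\le N}}\le C\exp\{CN^{1/\gamma}\}$, with the exponent $1/\gamma$ exactly matching the assumed exponential moment. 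This estimate is precisely the step you defer to ``tracking the constant through \cite{mbfbm-delay}''; as explained above, that tracking does not deliver an exponent at or below $1/\gamma$, so the key idea --- working with the $\gamma$-H\"older norm and the $N^{-1/\gamma}$-length chaining --- is missing from your proposal and must be supplied.
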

 \begin{proof}
This proof closely follows that of {Theorem 1} in \cite{mbfbm-integr}. Throughout the proof, $C$  will denote a generic constant depending on $p,r,T,K,L,\eta(0)$.

Fix some  $N \ge 1$, $R\ge 1$ and denote $\tau_{N,R} = \min\set{t\ge 0: \norm{Y}_{0,t,\infty}\ge R\text{ or } \norm{Z}_{0,t,\gamma}\ge N}$, $Y^{N,R}(t) = Y({t\wedge \tau_{N,R}})$, $\ind{t} = \ind{\set{t\le \tau_{N,R}}}$. Put
$I^a_t = \int_0^t a(s,Y^{N,R}_s)\ind{s}ds$, $I^b_t = \int_0^t b(s,Y^{N,R}_s)\ind{s}dW(s)$, $I^c_t = \int_0^t c(s,Y^{N,R}_s)\ind{s}dZ(s)$. %Assume without loss of generality that  $\theta< 1/2$.

Take a number $\Delta\in(0,1)$, whose value will be specified later. For $t \in [0,T]$ and $u,v$ such that $0\le u < v\le u+\Delta \le t$ write
\begin{align*}
\abs{Y^{N,R}(v)-Y^{N,R}(u)}&\le \abs{I^a_v-I^a_u} +\big|{I^b_v-I^b_u}\big|+ \abs{I^c_v-I^c_u}. \end{align*}
Estimate first
\begin{gather*}
\abs{I^a_v-I^a_u} \le \int_u^v \abs{a(z,Y^{N,R}_z)} dz \le C\int_u^v \left(1+\norm{Y^{N,R}_z}_{\CC}\right) dz \\
\le C\left(1+\norm{Y^{N,R}}_{0,t,\infty} + \norm{\eta}_{\CC}\right)(v-u)
\le C\left(1+\norm{Y^{N,R}}_{0,t,\infty} \right)(v-u),
\end{gather*}
where in the last step we have used the following simple observation: $\norm{\eta}_{\CC}\le \abs{\eta(0)} + r^\theta \norm{\eta}_{-r,0,\theta}\le C + K(r+1)\le C$.

From the Young--Love inequality it follows that
\begin{align*} \abs{I^c_v-I^c_u} \le C N  \left(
\norm{c(\cdot,Y^{N,R}_\cdot)}_{u,v,\infty} + 
\norm{c(\cdot,Y^{N,R}_\cdot)}_{u,v,\theta}(v-u)^\theta\right)(v-u)^\gamma.
\end{align*}
Further, from the linear growth assumption $$\norm{c(\cdot,Y^{N,R}_\cdot)}_{u,v,\infty}\le C \left(1+\norm{Y^{N,R}}_{0,v,\infty} + \norm{\eta}_\CC\right)\le C \left(1+\norm{Y^{N,R}}_{0,t,\infty}\right).$$ 
Define %for $h>0$
$$
\norm{f}_{a,b;\Delta,\theta} = \sup_{\substack{a\le x<y\le b,\\
y-x\le \Delta} } \frac{\abs{f(y)-f(x)}}{(y-x)^\theta}.
$$
Since
\begin{align*}
\abs{c(x,Y^{N,R}_x)-c(y,Y^{N,R}_y)}&\le \abs{c(x,Y^{N,R}_x)-c(y,Y^{N,R}_x)} + \abs{c(y,Y^{N,R}_x)-c(y,Y^{N,R}_y)} \\ &\le C\left(\abs{x-y}^\beta\big(1+\norm{Y^{N,R}_x}_{\CC}\big) + \norm{Y^{N,R}_x - Y^{N,R}_y}_{\CC} \right),
\end{align*}
then
\begin{gather*}
\norm{c(\cdot,Y^{N,R}_\cdot)}_{u,v,\theta}\le C\left((v-u)^{\beta-\theta}\big(1+\norm{Y^{N,R}}_{0,v,\infty}+\norm{\eta}_{\CC}\big) + \norm{Y^{N,R}}_{0,t;\Delta,\theta}+ \norm{\eta}_{-r,0,\theta}\right)\\
\le C\left(1 + (v-u)^{\beta-\theta}\big(1+\norm{Y^{N,R}}_{0,t,\infty}\big) + \norm{Y^{N,R}}_{0,t;\Delta,\theta}  \right).
\end{gather*}
Therefore, 
\begin{align*}
\abs{I^c_v-I^c_u}
\le CN \left(1+\norm{Y^{N,R}}_{0,t,\infty} + \norm{Y^{N,R}}_{0,t;\Delta,\theta} (v-u)^\theta \right)(v-u)^\gamma.
\end{align*}
Collecting the above estimates, we get
\begin{align*}
\norm{Y^{N,R}}_{0,t;\Delta,\theta}&\le C\left(1+\norm{Y^{N,R}}_{0,t,\infty}\right)\Delta^{1-\theta}
+ \norb{I^b}_{0,t;\Delta,\theta}\\
&\qquad  + CN  \left(1+\norm{Y^{N,R}}_{0,t,\infty}\Delta^{\gamma-\theta} + \norm{Y^{N,R}}_{0,t;\Delta,\theta} \Delta^\gamma \right)\\
&\le \norb{I^b}_{0,t;\Delta,\theta} + K' N\left(1+\norm{Y^{N,R}}_{0,t,\infty}\Delta^{\gamma-\theta} + \norm{Y^{N,R}}_{0,t;\Delta,\theta} \Delta^\gamma \right)
\end{align*}
with certain non-random constant $K' $. 

Suppose that $\Delta \le (2K'  N)^{-1/\gamma}$. Then
\begin{equation}\label{xnr}
\norm{Y^{N,R}}_{0,t;\Delta,\theta}\le 2\norb{I^b}_{0,t;\Delta,\theta} + 2K' N\left(1+\norm{Y^{N,R}}_{0,t,\infty}\Delta^{\gamma-\theta} \right).
\end{equation}
Let $s\in[0\vee(t-\Delta),t]$. Then from the obvious inequality
$$\norm{Y^{N,R}}_{0,t,\infty}\le \norm{Y^{N,R}}_{0,s,\infty} + \norm{Y^{N,R}}_{s,t,\theta} (t-s)^\theta\le \norm{Y^{N,R}}_{0,s,\infty} + \norm{Y^{N,R}}_{0,t;\Delta,\theta} \Delta^\theta
$$
using \eqref{xnr}, we obtain
\begin{align*}
\norm{Y^{N,R}}_{0,t,\infty}&\le \norm{Y^{N,R}}_{0,s,\infty} +  2\left(\norb{I^b}_{0,t;\Delta,\theta} + K' N\right)(t-s)^\theta+2K'N\norm{Y^{N,R}}_{0,t,\infty} (t-s)^\gamma\\
&\le \norm{Y^{N,R}}_{0,s,\infty} +  2\left(\norb{I^b}_{0,t;\Delta,\theta} + K' N\right)\Delta^\theta+2K'N\norm{Y^{N,R}}_{0,t,\infty} \Delta^\gamma.
\end{align*}
Assuming further that $\Delta\le (4K'  N)^{-1/\gamma}$, we get
$$
\norm{Y^{N,R}}_{0,t,\infty}\le 2\norm{Y^{N,R}}_{0,s,\infty} +  4\left(\norb{I^b}_{0,t;\Delta,\theta}+K'N\right)\Delta^\theta.
$$
Hence we derive for any $p>1$ that
\begin{equation}\label{exxnrp}
\ex{\norm{Y^{N,R}}^p_{0,t,\infty}}\le C \left(\ex{\norm{Y^{N,R}}_{0,s,\infty}^p} + \ex{\norb{I^b}_{0,t;\Delta,\theta}^p}\Delta^{p\theta}+N^p\right).
\end{equation}
Take some $\kappa\in(\theta,1/2)$.
Obviously, $\norb{I^b}_{0,t;\Delta,\theta}\le \Delta^{\kappa-\theta}\norb{I^b}_{0,t,\kappa}$. Assuming that $p>(1/2-\kappa)^{-1}$ and using the Garsia--Rodemich--Rumsey inequality, we get
\begin{align*}
\ex{\norb{I_b}_{0,t,\kappa}^p}&\le C \int_0^t\int_0^t\frac{\ex{\abs{I^b(x)-I^b(y)}^p}}{\abs{x-y}^{p\kappa +2}}dx\,dy\\
& \le C \int_0^t \int_0^t \ex{\abs{\int_x^y \abs{b(z,Y^{N,R}_z)}^2\ind{z} dz}^{p/2}}{\abs{x-y}^{-p\kappa -2}}dx\,dy\\
& \le C \int_0^t\int_0^t \int_x^y \left(1+\ex{\norm{Y^{N,R}_z}_{\CC}^{p}}\right) dz \abs{x-y}^{p/2-p\kappa -3}dx\,dy\\
&\le C\left(1+ \ex{\norm{Y^{N,R}}_{0,t,\infty}^p}+\norm{\eta}_{\CC}^p\right)\int_0^t \int_0^t \abs{x-y}^{p/2-p\kappa-2} dx\,dy\\
&\le  C \left(1+ \ex{\norm{Y^{N,R}}_{0,t,\infty}^p}\right).
\end{align*}
Plugging this estimate into \eqref{exxnrp}, we arrive at the inequality
\begin{align*}
\ex{\norm{Y^{N,R}}_{0,t,\infty}^p}
%&\le K'_{p} \left(\ex{\norm{Y^{N,R}}_{0,s,\infty}^p} + \ex{\norm{Y^{N,R}}_{0,t,\infty}^p}\Delta^{p/2}+N^p\Delta^{p\theta}\right)\\
%&
\le K'_p\left(\ex{\norm{Y^{N,R}}_{0,s,\infty}^p} + \ex{\norm{Y^{N,R}}_{0,t,\infty}^p}\Delta^{p\kappa}+N^p\right)
\end{align*}
with certain constant $K'_p$. Assuming that $\Delta\le (2K'_p)^{-1/(p\kappa)}$, we get
\begin{equation}\label{exnrp}
\ex{\norm{Y^{N,R}}_{0,t,\infty}^p} \le 2K'_{p} \left(\ex{\norm{Y^{N,R}}_{0,s,\infty}^p} +N^p\right).
\end{equation}
Finally, put $\Delta = \min\set{(4K'N)^{-1/\gamma},(2K'_{p})^{-1/(p\kappa)}}$. Splitting the segment $[0,T]$ into $[T/\Delta]+1$ parts of length at most $\Delta$, we obtain from the estimate \eqref{exnrp} that
\begin{equation*}
\ex{\norm{Y^{N,R}}_{0,T,\infty}^p} \le (2K'_{p}+1)^{T/\Delta + 1} \left(\abs{\eta(0)}^p +N^p\right)\le 
C\exp\set{C N^{1/\gamma}}.
\end{equation*}
Letting  $R\to\infty$ and using the Fatou lemma, we get
$$
\ex{\norm{X}_{0,T,\infty}^p\ind{\norm{Z}_{0,T,\gamma}\le N}} \le 
K'_p\exp\set{K'_p N^{1/\gamma}}
$$
with some constant $K'_p$.
Denote $\xi = \norm{X}_{0,T,\infty}^p$, $\eta = \norm{Z}_{0,T,\gamma}$ and write
\begin{align*}
\left(\ex{\xi^p}\right)^2 &\le \ex{\exp\set{2K'_{2p}\eta^{1/\gamma}}}\ex{\xi^{2p}\exp\set{-2K'_{2p} \eta^{1/\gamma}}}\\&\le C \sum_{n=1}^\infty \ex{\xi^{2p}\exp\set{-2K'_{2p} \eta^{1/\gamma}}\ind{\eta\in[n-1,n)}}\\
&\le C\sum_{n=1}^{\infty}\exp\set{-2K'_{2p}(n-1)^{1/\gamma}} \ex{\xi^{2p}\ind{\eta\in[n-1,n)}}\\
&\le C\sum_{n=1}^{\infty}\exp\set{-2K_{2p}'(n-1)^{1/\gamma}}\exp\set{K_{2p}'n^{1/\gamma}}<\infty,
\end{align*}
where the last constant depends on the parameters specified. The proof is now complete.
\end{proof}

We will impose the following assumptions concerning the convergence.
\begin{enumerate}[С1.]
\item
Convergence of the coefficients:
For all $\psi\in\CC$, $t\in[0,T]$,
\begin{gather*}
a^n(t,\psi)\to a^0(t,\psi),\quad b^n(t,\psi)\to b^0(t,\psi),\quad c^n(t,\psi)\to c^0(t,\psi),\quad n\to\infty.
\end{gather*}
\item Convergence of the initial conditions:
$$
\norm{\eta^n-\eta^0}_{\CC}\to 0, \quad n\to\infty.
$$
\end{enumerate}

\begin{theorem}\label{thm:main}
Under assumptions \textup{H1--H5} and \textup{C1--C2}, the following convergence in probability takes place:
\begin{equation*}
\norm{X^n-X^0}_{\infty,T}\overset{\pr}{\longrightarrow} 0,\quad n\to\infty.
\end{equation*}
\end{theorem}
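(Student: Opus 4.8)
The plan is to combine a localization onto events where the driver $Z$ and the solutions are under control with a stability estimate for the difference $U^n=X^n-X^0$ of the same type as in Lemma~\ref{prop-quasicontract}, the terms carrying the discrepancy between the $n$-th coefficients and the limit coefficients being disposed of by dominated convergence. Since convergence in probability need only be checked on events of probability arbitrarily close to one, fix $\eps>0$ and choose $M\ge1$ with $\pr(\norm{Z}_{0;T}>M)<\eps$. The constants in H1--H5 do not depend on $n$, and $\eta^n(0)\to\eta^0(0)$ is bounded, so Lemma~\ref{prop-apriormoment} applies to every equation in \eqref{sdde-coord} with one common constant: $\sup_n\ex{\norm{X^n}_T^p\ind{A_M}}\le C_p$ for every $p\ge1$, where $A_M=\set{\norm{Z}_{0;T}\le M}$; by the same a priori estimates of \cite{mbfbm-delay} (cf.\ the proof of Theorem~\ref{thm:moments}) one also has $\sup_n\ex{\norm{X^n}^p_{0,T,\kappa}\ind{A_M}}\le C_p$ for a suitable $\kappa\in(\al,1/2)$ (shrinking $\al$ at the outset so that $\al<\theta\wedge\beta$, which is possible since $\theta,\beta>1-\gamma$). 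By Markov's inequality there is $R\ge1$ such that, setting $G_n=A_M\cap\set{\norm{X^0}_{0,T,\kappa}\le R}\cap\set{\norm{X^n}_{0,T,\kappa}\le R}$, one has $\sup_n\pr(G_n^c)\le C\eps$. Since $\pr(\norm{U^n}_{\infty,T}>\eps')\le\pr(G_n^c)+(\eps')^{-p}\ex{\norm{U^n}_{\infty,T}^p\ind{G_n}}$ for any $\eps'>0$, it suffices to prove that $\ex{\norm{U^n}_{\infty,T}^p\ind{G_n}}\to0$ as $n\to\infty$ for one fixed large $p$; after the usual stopping-time localization at the exit time $\tau_n$ of $\norm{Z}_{0,\al;[0,\cdot]}$, $\norm{X^0}_{0,\cdot,\kappa}$ and $\norm{X^n}_{0,\cdot,\kappa}$ from their thresholds (so that $\tau_n=T$ on $G_n$), all integrals below are taken of the stopped processes.

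On $G_n$, split each coefficient difference, $a^n(s,X^n_s)-a^0(s,X^0_s)=\big(a^n(s,X^n_s)-a^n(s,X^0_s)\big)+\delta^n_a(s)$ with $\delta^n_a(s)=a^n(s,X^0_s)-a^0(s,X^0_s)$, and likewise for $b$ and $c$, so that
\begin{align*}
U^n(t)=D^n(t)&+\int_0^t\!\big(a^n(s,X^n_s)-a^n(s,X^0_s)\big)ds+\int_0^t\!\big(b^n(s,X^n_s)-b^n(s,X^0_s)\big)dW(s)\\
&+\int_0^t\!\big(c^n(s,X^n_s)-c^n(s,X^0_s)\big)dZ(s),\qquad t\in[0,T],
\end{align*}
where $D^n(t)=\big(\eta^n(0)-\eta^0(0)\big)+\int_0^t\delta^n_a(s)\,ds+\int_0^t\delta^n_b(s)\,dW(s)+\int_0^t\delta^n_c(s)\,dZ(s)$. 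Together with $\norm{U^n}_{\infty,t}\le\norm{\eta^n-\eta^0}_{\CC}+\sup_{0\le v\le t}\abs{U^n(v)}$ this shows that $\norm{U^n}_{\infty,t}$ is dominated by the free term $F^n:=\norm{\eta^n-\eta^0}_{\CC}+\norm{D^n}_{\infty,T}$ plus the three Lipschitz-difference integrals. By C2 the first summand of $F^n$ tends to $0$; for $D^n$, by C1 each of $\delta^n_a(s),\delta^n_b(s),\delta^n_c(s)$ tends to $0$ for every $s$, while H1 bounds these quantities (up to $\tau_n$) by $C(1+R)$, so that $\int_0^T\abs{\delta^n_a}\,ds\to0$ a.s.\ and $\ex{\sup_t\abs{\int_0^t\delta^n_b\,dW}^p\ind{G_n}}\to0$ by the Burkholder--Davis--Gundy inequality and dominated convergence. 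For the pathwise integral, H4 together with $\norm{X^0}_{0,T,\kappa}\le R$ and the $\theta$-H\"older continuity of the initial data make the maps $s\mapsto c^n(s,X^0_s)$ equi-H\"older on $G_n$ of an order exceeding $\al$, so their pointwise convergence to $s\mapsto c^0(s,X^0_s)$ is in fact uniform; with the estimate \eqref{integrfbmestimate} and $\norm{Z}_{0;T}\le M$ this yields $\norm{\int_0^\cdot\delta^n_c\,dZ}_{\infty,T}\ind{G_n}\to0$. Hence $\ex{(F^n)^p\ind{G_n}}\to0$.

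The three Lipschitz-difference integrals are estimated exactly as in the proof of Lemma~\ref{prop-quasicontract}: the drift and, after a Burkholder--Davis--Gundy bound, the It\^o term are each controlled by $C_{R,p}\int_0^t\ex{\norm{U^n}_{\infty,s}^p\ind{G_n}}\,ds$, while for the $dZ$-term one uses the $C^1$-structure of $c^n$ (H2--H3) to bound $\norm{c^n(\cdot,X^n_\cdot)-c^n(\cdot,X^0_\cdot)}_{1,\al;[u,v]}$ over a subinterval $[u,v]$ by $C_R\,\norm{Z}_{0,\al;[u,v]}$ times a combination of $\norm{U^n}_{v}$ and lower-order quantities. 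Splitting $[0,T]$ into finitely many subintervals, whose number is controlled by $M$ alone, on which the $dZ$-contribution can be absorbed into the left-hand side, and then running Gronwall's lemma in $L^p$ to dispose of the It\^o term, one arrives at $\ex{\norm{U^n}_{\infty,T}^p\ind{G_n}}\le C_{M,R,p}\,\ex{(F^n)^p\ind{G_n}}\to0$. (Since the $dt$- and $dW$-parts of the argument form an It\^o SDDE with coefficients depending on the path of $Z$ as a parameter, they may alternatively be handled by the appendix result on convergence of solutions of It\^o SDDEs with random coefficients, once the pathwise $dZ$-contribution has been dealt with as above.) Consequently $\limsup_n\pr(\norm{U^n}_{\infty,T}>\eps')\le C\eps$ for every $\eps'>0$, and since $\eps$ was arbitrary the conclusion follows.

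The main obstacle is the pathwise integral with respect to $Z$. On the one hand, to estimate the Lipschitz difference $c^n(\cdot,X^n_\cdot)-c^n(\cdot,X^0_\cdot)$ one genuinely needs $c^n$ to be $C^1$ with Lipschitz derivative rather than merely Lipschitz, because only then is the $\norm{\cdot}_{1,\al}$-norm of the composition controllable by $\norm{U^n}$ to first order --- this is the role of H2--H3, and the corresponding computation reproduces the core of the proof of Lemma~\ref{prop-quasicontract}. On the other hand, for the discrepancy term $\int_0^\cdot\delta^n_c\,dZ$ the pointwise convergence supplied by C1 is not by itself enough to make a generalized Lebesgue--Stieltjes integral vanish: it must be upgraded to uniform convergence, and this rests on the uniform H\"older regularity of $X^0$ on the localizing event. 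The remaining work --- the uniform-in-$n$ bookkeeping in the subinterval splitting and the Gronwall step --- is routine but lengthy.
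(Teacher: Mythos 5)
Your argument is essentially correct in outline, but it is a genuinely different route from the one the paper takes. The paper never performs a direct stability estimate on $X^n-X^0$: it mollifies the driver, setting $Z^N(t)=N\int_{(t-1/N)\vee 0}^t h_N(Z(s))\,ds$, rewrites the resulting equations for $X^{n,N}$ as It\^o SDDEs with random coefficients, applies the appendix result (Theorem~\ref{thm:itosddeconv}) to get $\norm{X^{n,N}-X^{0,N}}_{\infty,T}\probto 0$ for each fixed $N$, and then controls $\norm{X^{n}-X^{n,N}}_{\infty,T}$ uniformly in $n$ by Lemma~\ref{prop-quasicontract} (continuity in the driver, same coefficients) together with the localization supplied by Lemma~\ref{prop-apriormoment} and $\norm{Z^N-Z}_{0;T}\to 0$ a.s. The payoff of that design is that assumption C1 is only ever used inside Lebesgue and It\^o integrals, where pointwise convergence plus domination suffices, and the two stated lemmas are used strictly as black boxes. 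Your route instead requires two pieces of new work that the paper deliberately avoids: (i) a two-coefficient, same-driver variant of Lemma~\ref{prop-quasicontract} with different initial segments, yielding $\ex{\norm{U^n}_{\infty,T}^p\ind{G_n}}\le C\,\ex{(F^n)^p\ind{G_n}}$ --- the mechanism (Fr\'echet-derivative representation of $c^n(s,X^n_s)-c^n(s,X^0_s)$, subinterval splitting controlled by $M$, interleaved pathwise and $L^p$ Gronwall) is the same as in the proof of the stated lemma, but the statement is not available off the shelf; and (ii) the upgrade of the pointwise convergence $c^n(s,X^0_s)\to c^0(s,X^0_s)$ to convergence in a H\"older norm of order exceeding $\alpha$, via equi-H\"older bounds on $s\mapsto c^n(s,X^0_s)$ and interpolation, so that \eqref{integrfbmestimate} applies --- this in turn needs uniform-in-$n$ H\"older bounds on $X^0$ on the localizing event and the restriction $\alpha<\theta\wedge\beta$, neither of which the paper's proof requires. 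Your identification of these as the two genuine obstacles is accurate, and if carried out your argument would give something the paper's does not, namely a quantitative bound of $\norm{X^n-X^0}$ in terms of the coefficient discrepancies; but as written it leans on a substantial unproved stability lemma, whereas the paper's mollification argument closes with only the results it actually states.
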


\begin{proof}Obviously, we can assume without loss of generality that $Z(0)=0$.  Let for $N\ge 1$, $x\in\R^d$  
$$h_N(x) = \begin{cases}
x, &\abs{x}\le N,\\
N\frac{x}{\abs{x}}, & \abs{x}>N.
\end{cases}$$ 
Define the following sequence of smooth approximations of $Z$:
 $$
 Z^{ N}(t) = N \int_{(t-1/N)\vee 0}^t h_N(Z(s))\, ds,\quad   N\ge 1.
 $$
Consider auxiliary stochastic differential equations
\begin{equation}\label{eqAu1}
\begin{aligned}
X^{n, N}(t)& = \eta^n(0) + \int_0^t a^n(s, X^{n, N}_s)ds +  \int_0^t b^n(s, X^{n, N}_s)dW(s) +  \int_0^t c^n(s, X^{n, N}_s)d Z^{N}(s),\ t\in[0,T];\\
X^{n, N}(t)& = \eta^n(t),\ t\in[-r,0].
\end{aligned}\end{equation}
Since $Z^N$ is absolutely continuous, $$dZ^{N}(t) = N
\left(h_N(Z(t)) - h_N\big(Z((t-1/N)\vee 0)\big)\right)dt =: \dot Z^{ N}(t)\,dt $$  
we can rewrite the equation \eqref{eqAu1} as an It\^o delay differential  equation with random coefficients
\begin{equation}
\begin{aligned}
X^{n, N}(t)& = \eta^n(0) + \int_0^t f^{n,N}(s, X^{n, N}_s)ds +  \int_0^t b^n(s, X^{n, N}_s)dW(s),\ t\in[0,T];\\
X^{n, N}(t)& = \eta^n(t),\ t\in[-r,0],
\end{aligned}
\end{equation}
where $f^{n,N}(t, \psi) = a^n(t,\psi) + c^n(t,\psi) \dot{Z}^N(t)$, $t\in[0,T]$, $\psi\in\CC$. Obviously, the coefficients of these equations satisfy assumptions of Theorem~\ref{thm:itosddeconv} from Appendix~A. 
Therefore, 
\begin{equation}\label{XnN-Xn}
\norm{X^{n,N}-X^{0,N}}_{\infty,T}\overset{\pr}{\longrightarrow} 0,\quad n\to\infty.
\end{equation}
Now write for $\eps>0$
\begin{gather*}
\pr\left(\norm{X^n-X^0}_{\infty,T}>\eps\right)\le 
\pr\left(\norm{X^{n}-X^{n,N}}_{\infty,T}>\eps/3\right) \\
+ 
\pr\left(\norm{X^{n,N}-X^{0,N}}_{\infty,T}>\eps/3\right) + 
\pr\left(\norm{X^{0,N}-X^0}_{\infty,T}>\eps/3\right).
\end{gather*}
Hence,
\begin{gather*}
\limsup_{n\to\infty}\pr\left(\norm{X^n-X^0}_{\infty,T}>\eps\right)\le  2
\sup_{n\ge 0}\pr\left(\norm{X^{n}-X^{n,N}}_{\infty,T}>\eps/3\right)=:2\sup_{n\ge 0}P(A_{n,N,\eps}).
\end{gather*}
We need to show that $\sup_{n\ge 0}P(A_{n,N,\eps})\to 0$ as $N\to\infty$. To this end, write for any $M>0$, $R>0$
\begin{gather*}
\pr\left(A_{n,N,\eps}\right)\le 
\pr\left(A_{n,N,\eps}, \norm{X^{n,N}}_{T}\le R,\norm{X^{n}}_{T}\le R, \norm{Z^N}_{0;T}\le M, \norm{Z}_{0;T}\le M\right)\\
+ \pr\left(\norm{X^{n,N}}_{T}> R, \norm{Z^N}_{0;T}\le M\right) + \pr\left(\norm{X^{n}}_{T}> R, \norm{Z}_{0;T}\le M\right)\\
+ \pr\left(\norm{Z^N}_{0;T}> M\right) + \pr\left(\norm{Z}_{0;T}> M\right).
\end{gather*}
Since $\norm{Z^N-Z}_{0;T}\to 0$, $N\to\infty$ a.s. (it can be proved similarly to Lemma 2.1 in \cite{mbfbm-limit}), it follows easily from Lemma~\ref{prop-quasicontract} that 
$$
\sup_{n\ge 0}
\pr\left(A_{n,N,\eps}, \norm{X^{n,N}}_{T}\le R,\norm{X^{n}}_{T}\le R, \norm{Z^N}_{0;T}\le M, \norm{Z}_{0;T}\le M\right)\to 0,\ N\to\infty.
$$
Further, from Lemma~\ref{prop-apriormoment} we have with the help of Chebyshev inequality that $$\sup_{n\ge 0}\left(\sup_{N\ge 1} \pr\left(\norm{X^{n,N}}_{T}> R, \norm{Z^N}_{0;T}\le M\right)+\pr\left(\norm{X^{n}}_{T}> R, \norm{Z}_{0;T}\le M\right)\right)\to 0,\ R\to\infty.$$ 
Finally, 
$$
\sup_{N\ge 1}\pr\left(\norm{Z^N}_{0;T}> M\right) + \pr\left(\norm{Z}_{0;T}> M\right)\to 0,\ M\to\infty.
$$
Thus, we arrive to 
$$
\sup_{n\ge 0}P(A_{n,N,\eps})\to 0,\ N\to\infty,
$$
as required.
\end{proof}

\begin{corollary}
Assume that coefficients of \eqref{sdde-coord} satisfy assumptions {\rm H1--H5} and 
 $$\ex{\exp\{\norm{Z}^{1/\gamma}_{0,T,\gamma}\}}<\infty.$$
Then  for all $p\geq 1$
$$
\Ex{\norm{X^n-X^0}_{0,T,\infty}^p}\to 0, n\to \infty. 
$$
\end{corollary}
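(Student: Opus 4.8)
The plan is to combine the convergence in probability already established in Theorem~\ref{thm:main} with a uniform integrability argument, using Theorem~\ref{thm:moments} to control moments uniformly in $n$. First note that for any function $f$ one has $\norm{f}_{0,T,\infty}\le\norm{f}_{\infty,T}$, so Theorem~\ref{thm:main} immediately yields $\norm{X^n-X^0}_{0,T,\infty}\probto 0$ as $n\to\infty$. It therefore suffices to show that the family $\{\norm{X^n-X^0}_{0,T,\infty}^p:n\ge 0\}$ is uniformly integrable, since convergence in probability together with uniform integrability implies convergence in $L^1(\Omega)$, which is exactly the assertion.

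To this end I would apply Theorem~\ref{thm:moments} to each equation in \eqref{sdde-coord}. Its hypotheses hold with constants that do not depend on $n$: assumptions H1--H5 are imposed with a single constant $K$ and fixed exponents $\beta,\theta$; the exponential-moment bound $\ex{\exp\{\norm{Z}^{1/\gamma}_{0,T,\gamma}\}}\le L$ is exactly the assumption of the corollary for a suitable finite $L$; and $\abs{\eta^n(0)}$ is bounded uniformly in $n$, since $\norm{\eta^n-\eta^0}_{\CC}\to 0$ by C2 forces $\eta^n(0)\to\eta^0(0)$. Consequently, for every $q\ge 1$ there is a constant $C_q$, independent of $n$, such that $\ex{\norm{X^n}_{0,T,\infty}^q}\le C_q$. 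By the triangle inequality for $\norm{\cdot}_{0,T,\infty}$ together with the elementary bound $(a+b)^q\le 2^{q-1}(a^q+b^q)$, this gives $\sup_{n\ge 0}\ex{\norm{X^n-X^0}_{0,T,\infty}^q}<\infty$ for every $q\ge 1$.

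Fixing $p\ge 1$ and taking $q=2p$ above shows that $\{\norm{X^n-X^0}_{0,T,\infty}^p:n\ge 0\}$ is bounded in $L^2(\Omega)$ and hence uniformly integrable; combined with $\norm{X^n-X^0}_{0,T,\infty}^p\probto 0$, this yields $\ex{\norm{X^n-X^0}_{0,T,\infty}^p}\to 0$. The only point requiring care is the uniformity in $n$ of the constant in Theorem~\ref{thm:moments}; once one observes that this constant depends only on $p,r,T,K,L$ and (monotonically on) $\abs{\eta(0)}$, all of which are controlled uniformly in the present setting, no new estimates are needed and the remainder is routine.
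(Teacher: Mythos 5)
Your proof is correct and follows essentially the same route as the paper: convergence in probability from Theorem~\ref{thm:main} plus uniform integrability obtained from the moment bounds of Theorem~\ref{thm:moments}. You simply spell out the details the paper leaves implicit (the uniformity in $n$ of the constant in Theorem~\ref{thm:moments}, via C2 controlling $\eta^n(0)$), which is a worthwhile clarification but not a different argument.
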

\begin{proof}
Theorem~\ref{thm:main} implies the boundedness of sequence $\{\norm{X^n-X^0}_{0,T,\infty}^q,n\ge 1\}$ in $L^q(\Omega)$ for all $q>p$. Therefore, the statement of the corollary follows from {Theorem \ref{thm:main}} thanks to the uniform integrability. 
\end{proof}
\section{Applications}\label{sec:applic}

\subsection{Vanishing delay}\label{subsec:vanish}
Let, as above, the process $\set{Z(t),t\ge 0}$ be an $\F$-adapted process in $\R^l$ with $\gamma$-H\"older continuous paths, $\gamma>1/2$, $\set{W(t),t\ge 1}$ be a standard $\F$-Wiener process in $\R^m$. 

Consider the following sequence of equations with delay in $\R^d$.
\begin{equation}\label{vanishingdelayeq}
\begin{aligned}
X^n(t) &= \eta(0) + \int_0^t a^V(s,X^n(s),X^n(s-\tau_n))ds + \int_0^t b^V(s,X^n(s),X^n(s-\tau_n))dW(s)\\&\qquad + \int_0^t c^V(s,X^n(s),X^n(s-\tau_n))dZ(s),\\
X^n(t) &= \eta(t), t\in[-\tau_n,0].
\end{aligned}
\end{equation}
Here $a^V\colon [0,T]\times \R^{2d}\to\R^d$, $b_i^V\colon [0,T]\times \R^{2d}\to\R^{d}$, $i=1,\dots,m$,  $c_j^V\colon  [0,T]\times \R^{2d}\to\R^{d}$, $j=1,\dots,l$, $\eta\colon[0,T]\to \R^d$ are measurable functions;  $\{\tau_n,n\ge 1\}$ is a   sequence of positive numbers with $\tau_n<r$.

The assumptions about the coefficients are similar to H1--H5.
\begin{enumerate}[HV1.]
\item For all $t\in[0,T]$, $x,y\in \R^d$
$$
\abs{a^V(t,x,y)}+ \abs{b^V(t,x,y)} + \abs{c^V(t,x,y)}\le K(1+\abs{x}+\abs{y}).
$$
\item For all $t\in[0,T]$, $x,y\in \R^d$ there exist bounded derivatives $\partial_x c^V(t,x,y)$, $\partial_y c^V(t,x,y)$:
$$
\abs{\partial_x c^V(t,x,y)} + \abs{\partial_y c^V(t,x,y)}\le K.
$$
\item For all $t\in[0,T]$, $R>1$, and $x_1,x_2,y_1,y_2\in \R^d$ with $\abs{x_i}\le R$, $\abs{y_i}\le R$, $i=1,2$,
\begin{gather*}
\abs{a^V(t,x_1,y_1)-a^V(t,x_2,y_2)}+ \abs{b^V(t,x_1,y_1)-b^V(t,x_2,y_2)}\\
 + \abs{\partial_x c^V(t,x_1,y_1)-\partial_x c^V(t,x_2,y_2)}  + \abs{\partial_y c^V(t,x_1,y_1)-\partial_y c^V(t,x_2,y_2)}\\\le K_R(\abs{x_1-x_2}+\abs{y_1-y_2}).
\end{gather*}
\item There exists $\beta\in(1-\gamma,1)$ such that for all $s,t\in[0,T]$, $x,y\in \R^d$
\begin{gather*}
\abs{c^V(t_1,x,y)-c^V(t_2,x,y)}\le K\abs{t_1-t_2}^\beta(1+\abs{x}+\abs{y}),\\
\abs{\partial_x c^V(t_1,x,y)-\partial_x c^V(t_2,x,y)} + \abs{\partial_y c^V(t_1,x,y)-\partial_y c^V(t_2,x,y)}\le K\abs{t_1-t_2}^\beta.
\end{gather*}
\item The initial condition  $\eta$ is a H\"older continuous function: for some  $\theta\in(1-\gamma, 1/2)$ and for all $t_1,t_2\in[0,T]$
$$
\abs{\eta(t_1)-\eta(t_2)}\le K\abs{t_1-t_2}^\theta.
$$
\end{enumerate}
From  Theorem~\ref{thm:main} we deduce the following  result about vanishing delay convergence.
\begin{theorem}
Assume that the coefficients of equations \eqref{vanishingdelayeq} satisfy \textup{HV1--HV5}, and $\tau_n\to 0$, $n\to\infty$. Then we have the following uniform convergence:
$$
\norm{X^n-X}_{\infty,T}\overset{\pr}{\longrightarrow} 0,\ n\to\infty,
$$
to the solution $X$ of equation
\begin{gather*}
X(t) = \eta(0) + \int_0^t a^V(s,X(s),X(s))ds + \int_0^t b^V(s,X(s),X(s))dW(s) + \int_0^t c^V(s,X(s),X(s))dZ(s).
\end{gather*}
\end{theorem}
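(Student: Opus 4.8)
The plan is to reduce this statement to Theorem~\ref{thm:main} by viewing \eqref{vanishingdelayeq} as a particular case of the sequence \eqref{sdde-coord} with a \emph{fixed} delay horizon $r$. First I would set, for $n\ge 1$ and $\psi\in\CC$, $a^n(t,\psi)=a^V(t,\psi(0),\psi(-\tau_n))$, $b^n_i(t,\psi)=b^V_i(t,\psi(0),\psi(-\tau_n))$, $c^n_j(t,\psi)=c^V_j(t,\psi(0),\psi(-\tau_n))$ (this is legitimate since $-\tau_n\in(-r,0)$), and extend the initial data to $\CC$ by $\eta^n(t)=\eta(t\vee(-\tau_n))$, $t\in[-r,0]$. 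For the limit I would take $a^0(t,\psi)=a^V(t,\psi(0),\psi(0))$, likewise for $b^0,c^0$, and $\eta^0\equiv\eta(0)$. Since $a^n,b^n,c^n$ depend on $\psi$ only through $\psi(0)$ and $\psi(-\tau_n)$ and never through the values of $\psi$ on $[-r,-\tau_n)$, the solution of \eqref{sdde-coord} with these data, restricted to $[-\tau_n,T]$, coincides with the solution $X^n$ of \eqref{vanishingdelayeq}; likewise $X^0$ restricted to $[0,T]$ solves the limiting equation, and by the uniqueness part of \cite[Theorem 4.1]{mbfbm-delay} it \emph{is} the process $X$ in the statement.

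Next I would check that $a^n,b^n,c^n,\eta^n$ (including $n=0$) satisfy H1--H5. Linear growth H1 is immediate from HV1 because $\abs{\psi(0)},\abs{\psi(-\tau_n)}\le\norm{\psi}_{\CC}$. For H2, the Fr\'echet derivative of $c^n$ is $\partial_\psi c^n(t,\psi)\phi=\partial_x c^V(t,\psi(0),\psi(-\tau_n))\phi(0)+\partial_y c^V(t,\psi(0),\psi(-\tau_n))\phi(-\tau_n)$, whose operator norm is bounded by $\abs{\partial_x c^V}+\abs{\partial_y c^V}\le 2K$ by HV2, and for $n=0$ one has $\partial_\psi c^0(t,\psi)\phi=(\partial_x c^V+\partial_y c^V)(t,\psi(0),\psi(0))\phi(0)$. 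The local Lipschitz bounds H3 for $a^n,b^n$ and $\partial_\psi c^n$ follow from HV3 (each evaluation map $\psi\mapsto\psi(0)$, $\psi\mapsto\psi(-\tau_n)$ is $1$-Lipschitz from $\CC$ to $\R^d$), the $t$-H\"older bounds H4 follow from HV4 with the same $\beta\in(1-\gamma,1)$, and H5 holds for $\eta^n$ with the \emph{same} constant $K$ and exponent $\theta$: one checks $\abs{\eta^n(t_1)-\eta^n(t_2)}=\abs{\eta(t_1\vee(-\tau_n))-\eta(t_2\vee(-\tau_n))}\le K\abs{t_1-t_2}^\theta$ by distinguishing whether the arguments lie in $[-\tau_n,0]$ or are truncated to $-\tau_n$. (Here I use that $\eta$ is defined and $\theta$-H\"older on $[-r,0]$, which is implicit in HV5.)

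Then I would verify the convergence assumptions C1--C2. Fix $t\in[0,T]$ and $\psi\in\CC$. Since $\psi$ is continuous at $0$ and $\tau_n\to 0$, we have $\psi(-\tau_n)\to\psi(0)$; as HV3 forces $a^V(t,\cdot,\cdot),b^V(t,\cdot,\cdot),c^V(t,\cdot,\cdot)$ to be continuous on $\R^{2d}$, it follows that $a^n(t,\psi)\to a^0(t,\psi)$, $b^n(t,\psi)\to b^0(t,\psi)$, $c^n(t,\psi)\to c^0(t,\psi)$, i.e.\ C1 holds. For C2, $\norm{\eta^n-\eta^0}_{\CC}=\sup_{t\in[-r,0]}\abs{\eta(t\vee(-\tau_n))-\eta(0)}\le\sup_{s\in[-\tau_n,0]}\abs{\eta(s)-\eta(0)}\le K\tau_n^\theta\to 0$. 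Now Theorem~\ref{thm:main} applies and yields $\norm{X^n-X^0}_{\infty,T}\overset{\pr}{\longrightarrow}0$, which is the desired conclusion once $X^0$ is identified with $X$ as above.

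There is no deep obstacle; the work is essentially bookkeeping. The one point that must not be overlooked is that C1 genuinely requires the continuity (not merely measurability) of $a^V,b^V,c^V$ in the spatial arguments, which is why the argument leans on HV3 rather than on HV1 alone; and one must take some care with the extended initial conditions $\eta^n$ so that H5 holds with constants \emph{uniform in $n$} — which it does, with the original $K$ and $\theta$ — because Theorem~\ref{thm:main} is stated for a single common constant. Finally, one should record that $\beta$ and $\theta$ pass verbatim from HV4, HV5 to H4, H5, so the compatibility constraints $\beta\in(1-\gamma,1)$ and $\theta\in(1-\gamma,1/2)$ are automatically satisfied.
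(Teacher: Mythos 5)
Your proposal is correct and follows essentially the same route as the paper: recast \eqref{vanishingdelayeq} as an instance of \eqref{sdde-coord} with fixed delay horizon $r$ via $a^n(t,\psi)=a^V(t,\psi(0),\psi(-\tau_n))$ (and likewise for $b^n,c^n$), verify H1--H5 and C1--C2, and invoke Theorem~\ref{thm:main}. Your write-up is in fact more careful than the paper's on two points it leaves implicit --- the extension of the initial data to all of $[-r,0]$ with constants uniform in $n$, and the identification of $X^0$ with $X$ via uniqueness --- the only quibble being that continuity of $c^V$ in the spatial variables comes from HV2 (bounded derivatives) rather than HV3.
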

\begin{proof}
Set $\tau_0 = 0$ and define for $n\ge 0$ the following sequence of functions   $a^n(s,\psi) = a^V(s,\psi(0),\psi(-\tau_n))$,  $b^n_i(s,\psi) = b^V_i(s,\psi(0),\psi(-\tau_n))$, $i=1,\dots, m$, $c^n_j(s,\psi) = c^V_j(s,\psi(0),\psi(-\tau_n))$, $j=1,\dots,l$, where $t\in[0,T]$, $\psi\in \CC$. These coefficients are easily seen to satisfy assumptions H1--H4. Moreover, since the $a^V$, $b^V$, $c^V$ are continuous, we have  for any $t\in [0,T]$, $\psi\in\CC$ the convergence $a^n(t,\psi)\to a^0(t,\psi)$, $b^n(t,\psi)\to b^0(t,\psi)$, $c^n(t,\psi)\to c^0(t,\psi)$ as $n\to\infty$. The proof is finished by observing that for such coefficients the solutions to \eqref{sdde-coord} coincide with those of \eqref{vanishingdelayeq} and applying Theorem~\ref{thm:main}.
\end{proof}

\subsection{Euler approximations}\label{subsec:euler}

Consider now a standard mixed stochastic differential equation
\begin{equation}\label{eulereq}
X(t) = X(0) + \int_0^t a^E(s,X(s))ds + \int_0^t b^E(s,X(s))dW(s) + \int_0^t c^E(s,X(s))dZ(s).
\end{equation}

Here $a^E\colon [0,T]\times \R^{d}\to\R^d$, $b^E_i\colon [0,T]\times \R^{d}\to\R^{d}$, $i=1,\dots,k$,  $c^E_j\colon  [0,T]\times \R^{d}\to\R^{d}$, $j=1,\dots,l$, satisfy the following assumptions.
\begin{enumerate}[HE1.]
\item For all $t\in[0,T]$, $x\in \R^d$
$$
\abs{a^E(t,x)}+ \abs{b^E(t,x)} + \abs{c^E(t,x)}\le K(1+\abs{x}).
$$
\item For all $t\in[0,T]$, $x\in \R^d$ there exists a bounded derivative $\partial_x c^E(t,x)$:
$$
\abs{\partial_x c^E(t,x)} \le K.
$$
\item For all $t\in[0,T]$, $R>1$, and $x_1,x_2\in \R^d$ with $\abs{x_1}\le R$, $\abs{x_2}\le R$
\begin{gather*}
\abs{a^E(t,x_1)-a^E(t,x_2)}+ \abs{b^E(t,x_1)-b^E(t,x_2)} + \abs{\partial_x c^E(t,x_1)-\partial_x c^E(t,x_2)}\le K_R\abs{x_1-x_2}.
\end{gather*}
\item There exists $\beta\in(1-\gamma,1)$ such that for all $s,t\in[0,T]$, $x,y\in \R^d$
\begin{gather*}
\abs{c^E(t_1,x)-c^E(t_2,x)}\le K\abs{t_1-t_2}^\beta(1+\abs{x}),\\
\abs{\partial_x c^E(t_1,x)-\partial_x c^E(t_2,x)}\le K\abs{t_1-t_2}^\beta.
\end{gather*}
\end{enumerate}

Euler approximations for the solution of \eqref{eulereq} are constructed as follows. For $n\ge 1$ define $\delta=T/n$ and consider a uniform partition of $[0,T]$: $t_k^n = k\delta$, $k=0,1,\dots,n$. 
Define recursively
\begin{align*}
&X^n(0) = X(0),\\
&X^n(t^n_{k+1}) = X^n(t^n_{k}) + a^E(t^n_{k},X^n(t^n_{k}))\delta + b^E(t^n_{k},X^n(t^n_{k}))\left(W(t_{k+1}^n)-W(t_k^n)\right)\\&\qquad\qquad\qquad{} + c^E(t^n_{k},X^n(t^n_{k}))\left(Z(t_{k+1}^n)-Z(t_k^n)\right), k\ge 0.
\end{align*}
Denoting $t^n(s) = \max\set{t^n_k:t^n_k\le s}$, we can interpolate the values of approximations with
\begin{gather*}
X^n(t) = X(0) + \int_0^t a^E\big(t^n(s),X^n(t^n(s))\big)ds + \int_0^t b^E\big(t^n(s),X^n(t^n(s))\big)dW(s)\\
+ \int_0^t c^E\big(t^n(s),X^n(t^n(s))\big)dZ(s).
\end{gather*}
This can be considered as an equation with delay, however, its coefficient $c^n(s,\psi):= c^E(t^n(s),\psi(t^n(s)))$ does not satisfy the assumption H4, so one needs to prove analogues of Lemmas \ref{prop-apriormoment} and \ref{prop-quasicontract}. This can be done with a slight modification of corresponding arguments in \cite{mbfbm-delay};	 we skip the proof as this is not our main interest here.  Thus, we have the following result.
\begin{theorem}\label{thm:eulerconv}
Assume that the coefficients of equations \eqref{eulereq} satisfy \textup{HE1--HE4}. Then we have the following uniform convergence of Euler approximations:
$$
\sup_{t\in[0,T]}\abs{X^n(t)-X(t)}\overset{\pr}{\longrightarrow} 0,\ n\to\infty.
$$
\end{theorem}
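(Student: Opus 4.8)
The plan is to obtain Theorem~\ref{thm:eulerconv} as a direct application of Theorem~\ref{thm:main}, by recasting the Euler scheme as a sequence of mixed stochastic delay differential equations of the form \eqref{sdde-coord} with a fixed delay horizon $r\in(0,T]$ and coefficients converging to those of \eqref{eulereq}. First I would fix any $r>0$ and, for each $n\ge 1$, define on $[0,T]\times\CC$ the delayed coefficients $a^n(s,\psi)=a^E(t^n(s),\psi(t^n(s)-s))$, $b^n_i(s,\psi)=b^E_i(t^n(s),\psi(t^n(s)-s))$, $c^n_j(s,\psi)=c^E_j(t^n(s),\psi(t^n(s)-s))$, with $t^n(s)-s\in[-\delta,0]\subset[-r,0]$ once $\delta=T/n\le r$; the limiting coefficients are $a^0(s,\psi)=a^E(s,\psi(0))$, etc. The initial condition is $\eta^n\equiv\eta^0\equiv X(0)$ on $[-r,0]$, trivially satisfying H5 and C2. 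With this identification the interpolated Euler approximation solves \eqref{sdde-coord}, and the solution $X$ of \eqref{eulereq} solves the $n=0$ equation, so $\sup_{t\in[0,T]}\abs{X^n(t)-X(t)}=\norm{X^n-X^0}_{\infty,T}$.

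Next I would verify the hypotheses of Theorem~\ref{thm:main}. Assumptions H1, H2, H3 for the $a^n,b^n,c^n,\partial_\psi c^n$ follow immediately from HE1, HE2, HE3 for $a^E,b^E,c^E,\partial_x c^E$, since composition with the evaluation map $\psi\mapsto\psi(t^n(s)-s)$ preserves linear growth, boundedness of the Fr\'echet derivative (whose operator norm equals $\abs{\partial_x c^E}$), and local Lipschitz continuity. The subtle point, already flagged in the paper, is H4: the map $s\mapsto c^n(s,\psi)$ is piecewise constant with jumps at the mesh points $t^n_k$, hence \emph{not} H\"older continuous in $t$ uniformly in $n$ with a constant independent of $n$. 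As the authors note, one must therefore establish analogues of Lemma~\ref{prop-apriormoment} and Lemma~\ref{prop-quasicontract} for coefficients of this piecewise-constant-in-time type, by a slight modification of the arguments in \cite{mbfbm-delay}: the key observation is that on each subinterval $[t^n_k,t^n_{k+1}]$ the coefficient $c^n(\cdot,\psi)$ is constant in $t$, so the increments of the pathwise integral $\int c^n(s,X^n_s)\,dZ(s)$ over such a subinterval are controlled by the Young--Love inequality using only the spatial H\"older seminorm of $c^n(s,\cdot)$ and the $\gamma$-H\"older norm of $Z$, without ever invoking temporal regularity of $c^n$; summing over the $O(1/\delta)$ subintervals reproduces the a priori moment bound and the quasi-contraction estimate with constants uniform in $n$. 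Finally, C1 holds because $t^n(s)\to s$ and $\psi(t^n(s)-s)\to\psi(0)$ as $n\to\infty$ for every fixed $s,\psi$, and $a^E,b^E,c^E$ are continuous; hence $a^n(s,\psi)\to a^0(s,\psi)$, and likewise for $b^n$, $c^n$.

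With H1--H5 (in the modified form accommodating the temporal discontinuity of $c^n$) and C1--C2 in place, the (appropriately generalized) Theorem~\ref{thm:main} yields $\norm{X^n-X^0}_{\infty,T}\overset{\pr}{\longrightarrow}0$, which is exactly the asserted convergence. The main obstacle is thus not the reduction itself, which is routine, but verifying that Lemmas~\ref{prop-apriormoment} and \ref{prop-quasicontract} — and consequently the conclusion of Theorem~\ref{thm:main} — survive the replacement of H4 by the weaker structural property that $c^n$ is piecewise constant in $t$ on the mesh; I would carry out that verification first (or cite it from a modification of \cite{mbfbm-delay}) and only then assemble the one-line application above.
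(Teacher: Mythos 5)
Your proposal follows essentially the same route as the paper: recast the interpolated Euler scheme as a delay equation with coefficients $a^E(t^n(s),\psi(t^n(s)-s))$ etc., observe that H1--H3, H5 and C1--C2 hold but H4 fails because $c^n$ is only piecewise constant in $t$, and invoke suitably modified versions of Lemmas~\ref{prop-apriormoment} and \ref{prop-quasicontract} before applying Theorem~\ref{thm:main} --- which is exactly the paper's argument, down to the deferral of the modified lemmas to a ``slight modification'' of \cite{mbfbm-delay}. If anything, you are slightly more explicit than the paper (which leaves the verification entirely to the reader) in sketching why the piecewise-constant temporal structure still supports the Young--Love estimates on each mesh subinterval.
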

\begin{remark}
While establishing the convergence of approximations, Theorem~\ref{thm:eulerconv} tells nothing about the rate of convergence. In \cite{mbfbm-euler}, the rate of convergence was established for a one-dimensional equation driven with $Z=B^H$ under more restrictive assumptions on the coefficients.
\end{remark}

\appendix
\section{Limit theorem for It\^o delay equations}\label{sec:itosdde}
\renewcommand{\thesection}{\Alph{section}}
Here we prove a convergence result for It\^o SDDEs.  Consider a sequence of stochastic delay differential equations in $\R^d$:
\begin{equation}\label{osdde-coord}
Y^n(t,\omega) = \theta^n(0,\omega) + \int_{0}^{t} f^n(s,Y^n_s,\omega)ds + \sum_{i=1}^{m} \int_0^t g^n_i(s,Y^n_s,\omega)dW_i(s),
\end{equation}
or, shortly,
\begin{equation*} %\label{osdde-coord}
Y^n(t) = \theta^n(0) + \int_{0}^{t}f^n(s,Y^n_s)ds + \int_0^t g^n(s,Y^n_s)dW(s),
\end{equation*}
with $\F_0$-measurable initial conditions $Y^n(t,\omega) = \theta^n(t,\omega)$, $t\in[-r,0]$. 
These equations are similar to  \eqref{sdde-coord}, but they do not contain a part with the process $Z$. Another difference is that the coefficients of \eqref{osdde-coord} are random. We will impose the following assumptions on them.
\begin{enumerate}[H{A}1.]
\item For all $\psi\in\CC$, $t\in[0,T]$ \ $f^n(t,\psi)$ and $g^n(t,\psi)$ are $\F_t$-measurable.
\item
For all $\psi\in\CC$, $t\in[0,T]$ and a.a.\ $\omega\in\Omega$
\begin{gather*}
|f^n(t,\psi,\omega)|+|g^n(t,\psi,\omega)|\leq K(1+\norm{\psi}_{\CC}).
\end{gather*}
\item The functions $f^n$ and $g^n$ are locally Lipschitz continuous in $\psi$: for all $R>1$, $t\in[0,T]$, a.a.\ $\omega\in\Omega$, and any $\psi_1,\psi_2\in \CC$ with $\norm{\psi_1}_\CC\le R$,  $\norm{\psi_2}_\CC\le R$,
$$|f^n(t,\psi_1,\omega)-f^n(t,\psi_2,\omega)|+|g^n(t,\psi_1,\omega)-g^n(t,\psi_2,\omega)|\leq K_R\norm{\psi_1-\psi_2}_{\CC}.$$
\end{enumerate}
The unique solvability can be shown by slight modification of arguments in\cite[Theorem I.2]{mohammed} and \cite[Chapter 5, Theorem 2.5]{mao}. Moreover, there is a uniform integrability of solutions, which we state below.
\begin{proposition}\label{sddeA-boundedmoments} Under assumptions \textup{HA1--HA2}, for any $p\ge 2$ 
\begin{equation}
\ex{\norm{Y^n}_{\infty,T}^p\,\Big|\, \F_0}\le C_p(1+\norm{\theta^n}^p_{\CC}).
\end{equation}
\end{proposition}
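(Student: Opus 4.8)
The plan is to prove the conditional moment bound by a standard Gronwall-type argument, working pathwise in $\omega$ off a null set and conditioning on $\F_0$ throughout. First I would fix $p\ge 2$ and, for $R>0$, introduce the stopping time $\sigma_R = \inf\{t\ge 0 : \norm{Y^n}_{\infty,t}\ge R\}$ together with the stopped process $Y^{n,R}(t) = Y^n(t\wedge\sigma_R)$; writing the equation up to $t\wedge\sigma_R$, raising to the $p$-th power, and using the elementary inequality $(x+y+z)^p\le 3^{p-1}(x^p+y^p+z^p)$, I split $\ex{\norm{Y^{n,R}}^p_{\infty,t}\mid\F_0}$ into a contribution from the initial data, from the drift integral, and from the stochastic integral.

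For the initial-data term, $\norm{\theta^n}^p_\CC$ is $\F_0$-measurable and appears directly. For the drift term I would apply the H\"older (or Jensen) inequality in the time variable on $[0,t]$, then the linear growth bound HA2, to obtain
\begin{equation*}
\ex{\Big|\int_0^{s\wedge\sigma_R} f^n(u,Y^n_u)\,du\Big|^p\,\Big|\,\F_0}\le C_{p,T}\int_0^{s}\ex{\big(1+\norm{Y^{n,R}_u}_\CC\big)^p\,\Big|\,\F_0}\,du,
\end{equation*}
and here I would use the key observation $\norm{Y^{n,R}_u}_\CC\le \norm{\theta^n}_\CC + \norm{Y^{n,R}}_{\infty,u}$, which lets the right-hand side be controlled by $C_{p,T}\int_0^s\big(1+\norm{\theta^n}^p_\CC + \ex{\norm{Y^{n,R}}^p_{\infty,u}\mid\F_0}\big)\,du$. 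For the stochastic-integral term I would invoke the Burkholder--Davis--Gundy inequality (which holds for conditional expectations given $\F_0$, since $W$ is an $\mathbb F$-Wiener process and the coefficients are adapted by HA1), bounding $\ex{\sup_{s\le t}|\int_0^{s\wedge\sigma_R} g^n\,dW|^p\mid\F_0}$ by $C_p\,\ex{(\int_0^{t\wedge\sigma_R}|g^n(u,Y^n_u)|^2du)^{p/2}\mid\F_0}$, then again H\"older in time and HA2 to reach the same type of bound as for the drift.

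Combining the three pieces gives
\begin{equation*}
\ex{\norm{Y^{n,R}}^p_{\infty,t}\,\Big|\,\F_0}\le C_p\big(1+\norm{\theta^n}^p_\CC\big) + C_{p,T,K}\int_0^t \ex{\norm{Y^{n,R}}^p_{\infty,s}\,\Big|\,\F_0}\,ds,
\end{equation*}
where crucially the constants do not depend on $R$; since $\ex{\norm{Y^{n,R}}^p_{\infty,t}\mid\F_0}\le R^p<\infty$, the conditional Gronwall lemma applies and yields $\ex{\norm{Y^{n,R}}^p_{\infty,T}\mid\F_0}\le C_p(1+\norm{\theta^n}^p_\CC)e^{C_{p,T,K}T}$ uniformly in $R$. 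Letting $R\to\infty$ and using that $\sigma_R\to\infty$ a.s.\ (solutions exist and are a.s.\ finite on $[-r,T]$), Fatou's lemma for conditional expectations finishes the proof. The main obstacle is a bookkeeping one rather than a conceptual one: making the localization argument rigorous so that the Gronwall step is legitimate (i.e.\ ensuring the quantity to which Gronwall is applied is finite before the limit), and checking that BDG and Fatou go through in the conditional-expectation setting — both are routine but must be handled with the stopped processes $Y^{n,R}$ rather than $Y^n$ directly.
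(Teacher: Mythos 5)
Your proposal is correct and follows essentially the same route as the paper's proof: the paper localizes by multiplying with the indicator of the event $\set{\sup_{s\in[0,t]}\abs{Y^n(s)}^p\le R}$ rather than stopping the process at $\sigma_R$, but otherwise the steps are identical --- the three-term splitting, H\"older in time plus the linear growth bound HA2 for the drift, the Burkholder--Davis--Gundy inequality under the conditional expectation $\ex{{}\cdot{}\mid\F_0}$ for the martingale term, the observation $\norm{Y^n_s}_{\CC}\le\norm{\theta^n}_{\CC}+\sup_{u\in[0,s]}\abs{Y^n(u)}$, the conditional Gronwall lemma on the localized quantity, and finally $R\to\infty$ with Fatou's lemma.
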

\begin{proof}
Let $M^n(t) =\sup_{s\in[0,t]}\abs{Y(s)}^p$. For a fixed $R>0$, denote $\ind{t} = \ind{M^n(t)\le R}$. Also abbreviate $\exo{{}\cdot{}} = \ex{{}\cdot{}\mid \F_0}$. 

Estimate 
\begin{gather*}
M^n(t) \le C_p \left(\abs{\theta(0)}^p + \sup_{s\in[0,t]}\abs{\int_{0}^{s} f^n(u,Y^n_u)du}^p + 
\sup_{s\in[0,t]}\abs{\int_{0}^{s} g^n(u,Y^n_u)dW(u)}^p\right)\\
\le  C_p \left(\abs{\theta(0)}^p + \int_{0}^{t} \abs{f^n(s,Y^n_s)}^p ds + 
\sup_{s\in[0,t]}\abs{\int_{0}^{s} g^n(u,Y^n_u)dW(u)}^p\right)
\end{gather*}
Therefore, 
\begin{gather*}
\exo{M^n(t) \ind{t}}\le C_p \left(\abs{\theta(0)}^p + I^a_t + I^b_t\right),
\end{gather*}
where
\begin{align*}
&I^a_t = \int_{0}^{t} \exo{\abs{f^n(s,Y^n_s)}^p\ind{t}} ds,\\
&I^b_t = \exo{\sup_{s\in[0,t]}\abs{\int_{0}^{s} g^n(u,Y^n_u)dW(u)}^p\ind{t}}.
\end{align*}
Since the events $\set{M^n_t\le R}$ are decreasing in $t$, we can estimate
\begin{gather*}
I_t^a \le \int_{0}^{t}\exo{\abs{f^n(s,Y^n_s)}^p\ind{s}}ds\le C_p \int_0^t \exo{\left(1+\norm{Y_s^n}_{\CC}^p\right)\ind{s}}ds\\
\le C_p \int_0^t \exo{\big(1+ \norm{\theta^n}^p_{\CC} + M^n(s) \big)\ind{s}} ds\le C_p\left(1+\norm{\theta^n}^p_{\CC} + \int_0^t \exo{M^n(s)\ind{s}}ds\right).
\end{gather*}
Further, with the help of the Burkholder--Gundy--Davis inequality, we obtain
\begin{gather*}
I_t^b \le C_p\exo{\sup_{s\in[0,t]}\abs{\int_0^s g^n(u,Y^n_u)\ind{s}dW(u)}^p}\le C_p 
\exo{\left(\int_0^t g^n(s,Y^n)^2\ind{s}ds\right)^{p/2}}\\
\le C_p \int_0^t \exo{\abs{g^n(s,Y^n_s)}^p\ind{s}}ds\le C_p\left(1+\norm{\theta^n}^p_{\CC} + \int_0^t \exo{M^n(s)\ind{s}}ds\right),
\end{gather*}
where the last inequality is obtained the same way as for $I^a_t$.

Consequently,
\begin{gather*}
\exo{M^n(t)\ind{t}}\le C_p\left(1+\norm{\theta^n}^p_{\CC} + \int_0^t \exo{M^n(s)\ind{s}}ds\right).
\end{gather*}
By the Gronwall lemma, 
$$
\exo{M^n(t)\ind{t}}\le C_p\left(1+\norm{\theta^n}^p_{\CC}\right).
$$
Now letting $R\to\infty$ and using the Fatou lemma, we arrive at the statement.
\end{proof}

Concerning the convergence, we will assume the following.
\begin{enumerate}[C{A}1.]
\item
Pointwise convergence of the coefficients in probability: for all $\psi\in\CC$, $t\in[0,T]$ 
\begin{gather*}
f^n(t,\psi)\overset{\pr}{\longrightarrow} f^0(t,\psi),\quad g^n(t,\psi)\overset{\pr}{\longrightarrow} g^0(t,\psi), \quad n\to \infty.
\end{gather*}
\item Convergence of initial conditions in probability: 
$$ \norm{\theta^n - \theta^0}_{\CC}\overset{\pr}{\longrightarrow} 0,\quad n\to\infty.$$
\end{enumerate}

\begin{theorem}\label{thm:itosddeconv}
Under assumptions \textup{HA1--HA3} and \textup{CA1--CA2}, the following convergence in probability takes place:
\begin{equation*}
\norm{Y^n-Y^0}_{\infty,T}\overset{\pr}{\longrightarrow} 0,\quad n\to\infty.
\end{equation*}
Moreover, if additionally for some $p\ge 2$ \ $\ex{\norm{\theta^n-\theta^0}_{\CC}^p}\to 0$, $n\to\infty$,  then 
\begin{equation*}
\ex{\norm{Y^n-Y^0}_{\infty,T}^p}\to 0,\quad n\to\infty.
\end{equation*}
\end{theorem}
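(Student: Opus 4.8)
The plan is a localization-and-Gronwall estimate for the difference $D_n:=Y^n-Y^0$, carried out entirely ``in probability'', since the initial data are only controlled in probability. Fix $\varepsilon>0$. I would use two families of localizing sets. The first is the stopping time $\tau_R=\inf\{t\ge0:|Y^n(t)|\vee|Y^0(t)|\ge R\}\wedge T$; by Proposition~\ref{sddeA-boundedmoments} (applied conditionally on $\F_0$) together with the tightness of $\{\norm{\theta^n}_\CC\}$ (which follows from \textup{CA2} and $\norm{\theta^0}_\CC<\infty$ a.s.), one has $\sup_n\pr\big(\tau_R<T\text{ or }\norm{\theta^n}_\CC\vee\norm{\theta^0}_\CC>R\big)\to0$ as $R\to\infty$. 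The second is the $\F_0$-measurable set $G_n=\{\norm{\theta^n-\theta^0}_\CC\le\rho,\ \norm{\theta^0}_\CC\le R\}$, for which $\pr(G_n^c)\to0$ (fixed $\rho,R$) by \textup{CA2}. The point of $G_n$ is that $\ind{G_n}$, being $\F_0$-measurable, may be pulled inside It\^o integrals, so the whole equation for $D_n$ may be multiplied by $\ind{G_n}$ without harm while gaining $\ind{G_n}\norm{\theta^n-\theta^0}_\CC\le\rho$.

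Next I would estimate $\phi_n(t):=\ex{\ind{G_n}\norm{D_n}_{\infty,t\wedge\tau_R}^2}$, which is finite (bounded by $(2R+\rho)^2$). Splitting $f^n(s,Y^n_s)-f^0(s,Y^0_s)=[f^n(s,Y^n_s)-f^n(s,Y^0_s)]+[f^n(s,Y^0_s)-f^0(s,Y^0_s)]$ and likewise for $g^n$, bounding the first bracket by $K_{R+1}\norm{D_n}_{\infty,s}$ via \textup{HA3} on $\{s\le\tau_R\}\cap G_n$ (the Lipschitz constant in \textup{HA3} being independent of $n$), and applying the Cauchy--Schwarz inequality to the drift and Doob's inequality with the It\^o isometry to the martingale part, one gets
\[
\phi_n(t)\le C_R\Big(\rho^2+\varepsilon_n+\int_0^t\phi_n(s)\,ds\Big),\qquad
\varepsilon_n:=\ex{\ind{G_n}\int_0^{T\wedge\tau_R}\big(|f^n(s,Y^0_s)-f^0(s,Y^0_s)|^2+|g^n(s,Y^0_s)-g^0(s,Y^0_s)|^2\big)ds}.
\]
Gronwall's lemma yields $\phi_n(T)\le C_R(\rho^2+\varepsilon_n)$. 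Then, by the Chebyshev inequality on the set $\{\tau_R=T,\ \norm{\theta^n}_\CC\vee\norm{\theta^0}_\CC\le R\}\cap G_n$ (on which $\norm{D_n}_{\infty,T}=\norm{D_n}_{\infty,T\wedge\tau_R}$) and discarding its complement, $\limsup_n\pr(\norm{D_n}_{\infty,T}>\varepsilon)\le\sup_n\pr\big(\tau_R<T\text{ or }\norm{\theta^n}_\CC\vee\norm{\theta^0}_\CC>R\big)+C_R\rho^2/\varepsilon^2$; choosing first $R$ large and then $\rho$ small makes this arbitrarily small, proving the first assertion.

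The main obstacle is the term $\varepsilon_n$: one must show $f^n(s,Y^0_s)\to f^0(s,Y^0_s)$ and $g^n(s,Y^0_s)\to g^0(s,Y^0_s)$ in probability for each fixed $s$. Assumption \textup{CA1} gives convergence only at fixed \emph{deterministic} $\psi$, whereas $Y^0_s$ is random, and balls in $\CC$ are not compact, so one cannot upgrade pointwise convergence to locally uniform convergence directly. I would get around this using separability of $\CC$: approximate $Y^0_s$ in probability by $\CC$-valued simple random variables $\zeta_j$ with $\norm{\zeta_j}_\CC\le\norm{Y^0_s}_\CC+1$, decompose $f^n(s,Y^0_s)-f^0(s,Y^0_s)$ through $\zeta_j$, bound the two ``approximation'' differences on $\{\norm{Y^0_s}_\CC\le R-1\}$ by $K_R\norm{Y^0_s-\zeta_j}_\CC$ via the $n$-uniform local Lipschitz property, and note that for each fixed $j$ the remaining term $f^n(s,\zeta_j)-f^0(s,\zeta_j)$ is a \emph{finite} sum of terms each tending to $0$ in probability by \textup{CA1}. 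This gives the required convergence, and since on $\{s\le\tau_R\}\cap G_n$ the integrand in $\varepsilon_n$ is bounded by a constant (by \textup{HA2}), dominated convergence — first in $\omega$, then in $s$ — yields $\varepsilon_n\to0$.

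For the $L^p$ statement I would run the same localization-and-Gronwall scheme for $\psi_n(t)=\ex{\ind{\{\norm{\theta^n}_\CC\le R\}}\norm{D_n}_{\infty,t\wedge\tau_R}^p}$, which is finite by the assumed $L^p$-convergence of the initial data, with the truncation by $\rho$ now replaced by the term $\ex{\norm{\theta^n-\theta^0}_\CC^p}\to0$; combining the resulting estimate with a uniform integrability argument (the moment bounds coming from Proposition~\ref{sddeA-boundedmoments} and the $L^p$-convergence of $\theta^n-\theta^0$) upgrades the convergence in probability to $\ex{\norm{Y^n-Y^0}_{\infty,T}^p}\to0$.
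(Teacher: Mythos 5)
Your argument for the first assertion is correct, but it takes a genuinely different route from the paper's. The paper first reduces to almost sure convergence of the initial segments via the standard subsequence trick, then runs a single Gronwall estimate on the \emph{conditional} moment $\ex{\norm{Y^n-Y^0}_{\infty,t}^p\mid\F_0}$ and concludes $\limsup_n\ex{\Delta^n_t\mid\F_0}=0$ a.s., hence convergence in probability. You instead localize explicitly (the stopping time $\tau_R$ controlling the solutions plus the $\F_0$-measurable event $G_n$ controlling the initial data), run an unconditional $L^2$ Gronwall bound on the stopped difference, and finish with Chebyshev. Your version is more careful on two points that the paper's proof glosses over: (i) HA3 is only a \emph{local} Lipschitz condition, so the bound $\abs{f^n(s,Y^n_s)-f^n(s,Y^0_s)}\le K_R\norm{Y^n_s-Y^0_s}_{\CC}$ is legitimate only after a localization such as your $\tau_R$, whereas the paper applies it with an $R$-free constant; (ii) CA1 gives convergence only at deterministic $\psi$, and your separability/simple-function approximation of $Y^0_s$, with the $n$-uniform local Lipschitz constant absorbing the approximation error, is exactly what is needed to deduce $f^n(s,Y^0_s)\to f^0(s,Y^0_s)$ in probability; the paper invokes dominated convergence together with CA1 without addressing the randomness of the argument. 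The price of your route is a slightly heavier bookkeeping of localizing events, but nothing is lost.

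The one thin spot is the $L^p$ statement. The uniform integrability of $\set{\norm{Y^n-Y^0}_{\infty,T}^p}$ that you invoke does not follow from the ingredients you list: Proposition~\ref{sddeA-boundedmoments} bounds moments of order $p'$ only through $\ex{\norm{\theta^n}_{\CC}^{p'}}$, and the hypothesis controls the initial data only in $L^p$, so no moment of order $p'>p$ (nor any de la Vall\'ee-Poussin majorant) is available, and a conditional moment bound by a uniformly integrable family does not by itself yield uniform integrability. The direct repair is what the paper does: take full (unlocalized) expectation in the Gronwall inequality, replacing your $\rho^p$ by $\ex{\norm{\theta^n-\theta^0}_{\CC}^p}\to0$ and handling the coefficient-convergence remainder exactly as in the first part; note that both this route and yours implicitly use $\ex{\norm{\theta^0}_{\CC}^p}<\infty$, which the theorem leaves tacit.
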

\begin{proof}
It is enough to prove that any subsequence of $\norm{Y^n-Y^0}_{\infty,T}^p$ contains a subsequence converging to zero in probability. Therefore, it can be assumed without loss of generality that $\norm{\theta^n-\theta^0}_{\CC}\to0$, $n\to\infty$, a.s.

Denote for some $p\ge 2$ \ $\Delta^n_t = \norm{Y^n-Y^0}_{\infty,t}^p$ and abbreviate, as in Proposition~\ref{sddeA-boundedmoments}, $\exo{{}\cdot{}} = \ex{{}\cdot{}\mid \F_0}$.
Write 
\begin{gather*}
\exo{\Delta_t^n}\le 
C_p\Bigg(\abs{\theta^n(0)-\theta^0(0)}^p  + \exo{\sup_{s\in[0,t]}\abs{\int_0^s \left(f^n(u,Y^n_u)-f^0(u,Y^0_u)\right)du}^p}\\
+ \exo{\sup_{s\in[0,t]}\abs{\int_0^s \left(g^n(u,Y^n_u)-g^0(u,Y^0_u)\right)dW(u)}^p}\Bigg)=: C_p \left(\abs{\theta^n(0)-\theta^0(0)}^p + J^a_t + J^b_t\right).
\end{gather*}
Estimate separately
\begin{gather*}
J^a_t \le C_p\int_{0}^{t}\exo{\abs{f^n(s,Y^n_s)-f^0(s,Y^0_s)}^p} ds\\
\le C_p \int_{0}^{t}\exo{\abs{f^n(s,Y^n_s)-f^n(s,Y^0_s)}^p + \abs{f^n(s,Y^0_s)-f^0(s,Y^0_s)}^p}  ds.
\end{gather*}
Now
\begin{gather*}
\int_{0}^{t}\exo{\abs{f^n(s,Y^n_s)-f^n(s,Y^0_s)}^p}  ds\le C_p \int_{0}^{t}\exo{\norm{Y^n_s-Y^0_s}_{\CC}^p}  ds\\
\le C_p \int_{0}^{t}\exo{\norm{\theta^n-\theta^0}^p_{\CC}+\Delta_s^n}ds\le C_p\left(\norm{\theta^n-\theta^0}^p_{\CC} + \int_{0}^{t}\exo{\Delta_s^n}ds\right).
\end{gather*}
Thus,
\begin{gather*}
J^a_t \le C_p\left(\norm{\theta^n-\theta^0}^p_{\CC} + \int_{0}^{t}\exo{\Delta_s^n}ds + \int_{0}^{t}\exo{ \abs{f^n(s,Y^0_s)-f^0(s,Y^0_s)}^p}  ds\right).
\end{gather*}
A similar estimate for $J^b_t$ is obtained with the help of the Burkholder--Gundy--Davis inequality.
Therefore, we arrive to 
\begin{equation}\label{limsupdeltan}
\begin{gathered}
\exo{\Delta_t^n}\le 
C_p\Bigg(\norm{\theta^n-\theta^0}^p_{\CC} + \int_{0}^{t}\exo{\Delta_s^n}ds + \int_{0}^{t}\exo{ \abs{f^n(s,Y^0_s)-f^0(s,Y^0_s)}^p}  ds\Bigg).\end{gathered}
\end{equation}
Thanks to the linear growth assumption HA2 and to Proposition~\ref{sddeA-boundedmoments}, $$\exo{ \abs{f^n(s,Y^0_s)-f^0(s,Y^0_s)}^p}\le C_p (1+\norm{\theta^0}_{\CC}^{p}).$$ Hence, in view of the dominated convergence theorem and assumption CA1, the last term in \eqref{limsupdeltan} vanishes as $n\to\infty$. Also by Proposition~\ref{sddeA-boundedmoments}, $\limsup_{n\to\infty} \exo{\Delta_s^n}\le C_p (1+\limsup_{n\to\infty}\norm{\theta^n}^{p}_{\CC})<\infty$. Thus, taking $\limsup_{n\to\infty}$ in \eqref{limsupdeltan}, we obtain 
\begin{gather*}
\limsup_{n\to\infty}\exo{\Delta_t^n}\le 
C_p \int_{0}^{t}\limsup_{n\to\infty}\exo{\Delta_s^n}ds.
\end{gather*}
By  the Gronwall lemma, $\limsup_{n\to\infty}\exo{\Delta_t^n} = 0$, yielding the first statement of the theorem. The second statement is obtained by taking expectation in equation \eqref{limsupdeltan} and repeating the argument following that equation.
\end{proof}

\bibliographystyle{elsarticle-harv}
\bibliography{mbfbm-convergence}

\end{document}